\numberwithin{equation}{section}
\newtheorem{prop}{Proposition}[section]
\newtheorem{dfn}[prop]{Definition}
\newtheorem{lemma}[prop]{Lemma}
\newtheorem{cor}[prop]{Corollary}
\newtheorem{thm}[prop]{Theorem}
\newtheorem{clm}[prop]{Claim}
\newtheorem{obs}[prop]{Observation}
\newtheorem{prob}[prop]{Problem}
\newcommand{\mc}[1]{\mathcal{#1}}
\newcommand{\mb}[1]{\mathbb{#1}}
\newcommand{\oo}{\omega}
\newcommand{\uhr}{\upharpoonright}
\newcommand{\omg}{{\omega_1}}
\DeclareMathOperator{\acc}{acc}
\DeclareMathOperator{\non}{non}
\DeclareMathOperator{\cf}{cf}
\def\<{\left\langle}
\def\>{\right\rangle}
\def\br#1;#2;{\bigl[ {#1} \bigr]^ {#2} }
\newcommand{\mf}[1]{\mathfrak{#1}}
\newcommand{\setm}{\setminus}
\newcommand{\subs}{\subset}
\newcommand{\ran}{\operatorname{ran}}
\title{More ZFC inequalities between cardinal invariants}
\date{\today}
  \author{Vera Fischer}
 \address[V. Fischer]{Universit\"at Wien,
Kurt G\"odel Research Center for Mathematical Logic, Wien, Austria}
 \email{vera.fischer@univie.ac.at}
  \urladdr{http://www.logic.univie.ac.at/$\sim $vfischer/}
  \author{D\'aniel T. Soukup}
  \address[D.T. Soukup]{Universit\"at Wien,
Kurt G\"odel Research Center for Mathematical Logic, Wien, Austria}
 \email[Corresponding author]{daniel.soukup@univie.ac.at}
 \urladdr{http://www.logic.univie.ac.at/$\sim  $soukupd73/}
\newtheorem*{rep@theorem}{\rep@title}
\newcommand{\newreptheorem}[2]{%
\newenvironment{rep#1}[1]{%
 \def\rep@title{#2 \ref{##1}}%
 \begin{rep@theorem}}%
 {\end{rep@theorem}}}
\subjclass[2010]{03E05, 03E17}
\keywords{eventually different, mad, almost disjoint, unbounded, reaping, splitting, dominating, cardinal characteristic}
\begin{document}
 \begin{abstract}  
 Motivated by recent results and questions of D. Raghavan and S. Shelah, we present ZFC theorems on the bounding and various almost disjointness numbers, as well as on reaping and dominating families on uncountable, regular cardinals.  We show that if $\kappa=\lambda^+$ for some $\lambda\geq \oo$ and $\mf b(\kappa)=\kappa^+$ then $\mf a_e(\kappa)=\mf a_p(\kappa)=\kappa^+$.  If, additionally, $2^{<\lambda}=\lambda$ then $\mf a_g(\kappa)=\kappa^+$ as well. Furthermore, we prove a variety of new bounds for $\mf d(\kappa)$ in terms of $\mf r(\kappa)$, including $\mf d(\kappa)\leq \mf r_\sigma(\kappa)\leq \cf([\mf r(\kappa)]^\oo)$, and $\mf d(\kappa)\leq \mf r(\kappa)$ whenever  $\mf r(\kappa)<\mf b(\kappa)^{+\kappa}$ or $\cf(\mf r(\kappa))\leq \kappa$ holds.
 

\end{abstract}
\maketitle

\section{Introduction}

In this article, we focus on unbounded, dominating and eventually different families of functions in $\kappa^\kappa$, and unsplit families of sets from $[\kappa]^\kappa$ for an uncountable, regular cardinal $\kappa$. There is a great history of such studies for $\kappa=\aleph_0$, which  later sparked significant interest in the case of $\kappa>\aleph_0$. Especially so, that some long unresolved questions for cardinal characteristics on $\aleph_0$ have been answered for uncountable cardinals (e.g. Roitman's problem \cite{blass}). The goal of our paper is to present new ZFC relations between cardinal invariants on an uncountable, regular $\kappa$, many that fail to hold in the countable case.

First, in Section \ref{sec:ub}, we show that if $\kappa$ is a successor cardinal and there is a $\leq^*$-unbounded family of functions in $\kappa^\kappa$ of size $\kappa^+$, then there is a maximal family of eventually different functions/permutations of size $\kappa^+$ as well. If, additionally, $2^{<\lambda}=\lambda$ holds where $\kappa=\lambda^+$, then there is a maximal group of eventually different permutations of $\kappa$ of size $\kappa^+$ as well. These results generalize recent work of D. Raghavan and S. Shelah \cite{dilip2}, and provide strengthening of certain results from \cite{blass,hytt}.

Next, in Section \ref{sec:dr}, we bound the minimal size of a $\leq^*$-dominating family by the minimal size of an unsplit family under various conditions. Raghavan and Shelah proved that $\mf d(\kappa)\leq \mf r(\kappa)$ whenever $\kappa\geq \beth_\oo$. Our main result here is Theorem \ref{newbound} that provides a variety of new bounds for $\mf d(\kappa)$ in terms of $\mf r(\kappa)$, and a new characterization of the dominating number (see Corollary \ref{cor:char}, \ref{cor:alephn}, and \ref{cor:pcf}). In an independent argument, we next show that $\cf(\mf r(\kappa))\leq \kappa$ implies $\mf d(\kappa)\leq \mf r(\kappa)$ as well (see Theorem \ref{thm:cof}). 

Finally, we summarize the relations between these invariants in three diagrams, and end our article by emphasizing the most important open problems in the area. In particular, it remains open if $\mf d(\kappa)\leq \mf r(\kappa)$ holds for all uncountable, regular $\kappa$.

\medskip

We aimed our paper to be  self contained, and to collect most of the known results on related cardinal invariants. Let us also refer the new reader to A. Blass' \cite{char} as a classical reference on cardinal characteristics on $\aleph_0$.

\subsection*{Acknowledgments}The authors would like to thank the Austrian Science Fund (FWF) for their generous support through the FWF Grant Y1012-N35 (V. Fischer), and the FWF Grant I1921 (D. Soukup).

\section{Unbounded and mad families of functions} \label{sec:ub}

Let us start by recalling some well known definitions. The \emph{bounding number} $\mf b(\kappa)$ is the minimal size of a family $\mc F\subs \kappa^\kappa$ so that there is no single function $g\in \kappa^\kappa$ so that $\{\alpha<\kappa:g(\alpha)< f(\alpha)\}$ has size $<\kappa$ for all $f\in \mc F$. In other words, $\mc F$ is unbounded in the relation $\leq^*$ of almost everywhere dominance. We use the fact that $\mf b_{cl}(\kappa)=\mf b(\kappa)$ for any uncountable, regular $\kappa$ \cite{cummings}: there is $\mc F\subs \kappa^\kappa$ of size $\mf b(\kappa)$ so that for any $g\in \kappa^\kappa$ there is some $f\in \mc F$ with  $\{\alpha<\kappa:g(\alpha)< f(\alpha)\}$ stationary. I.e., $\mc F$ is $\leq_{cl}$-unbounded where $f\leq_{cl}g$ iff $\{\alpha<\kappa:f(\alpha)\leq g(\alpha)\}$ contains a club\footnote{I.e., closed and unbounded.	} subset of $\kappa$.

We also remind the reader of the usual almost disjointness numbers; in our context almost disjoint (eventually different) means that the intersection of the sets (functions) has size $<\kappa$.

\begin{enumerate}[(i)]
 \item $\mf a(\kappa)$ is the minimal size of a maximal almost disjoint family $\mc A\subs [\kappa]^\kappa$ that is of size $\geq \kappa$ (the latter rules out trivialities like $\mc A =\{\kappa\}$).
\item $\mf a_e(\kappa)$ is the minimal size of a maximal, eventually different family of functions in $\kappa^\kappa$.
\item  $\mf a_p(\kappa)$ is the minimal size of a maximal, eventually different family of functions in $S(\kappa)$, the set of bijective members of $\kappa^\kappa$.
\item $\mf a_g(\kappa)$ is the minimal size of an almost disjoint subgroup of $S(\kappa)$, that is maximal among such subgroups.
\end{enumerate}

\medskip

D. Raghavan and S. Shelah \cite{dilip} recently proved that $\mf b(\kappa)=\kappa^+$ implies $\mf a(\kappa)=\kappa^+$ for any regular, uncountable $\kappa$, by an elegant, and surprisingly elementary application of Fodor's pressing down lemma. Building on their momentum, we extend this result to related cardinal invariants on maximal families of eventually different functions and permutations (see \cite{blass,hytt} for a detailed background).

\begin{thm}
Suppose that $\kappa=\lambda^+$ for some $\lambda\geq \oo$ and $\mf b(\kappa)=\kappa^+$. Then $\mf a_e(\kappa)=\mf a_p(\kappa)=\kappa^+$. If, additionally, $2^{<\lambda}=\lambda$ then $\mf a_g(\kappa)=\kappa^+$ as well.
\end{thm}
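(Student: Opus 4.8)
The strategy is to adapt the Raghavan--Shelah argument for $\mf b(\kappa)=\kappa^+ \Rightarrow \mf a(\kappa)=\kappa^+$ to the setting of eventually different functions, permutations, and groups of permutations. The common scheme is: take a $\leq_{cl}$-unbounded family $\{f_\xi : \xi<\kappa^+\}$ of size $\kappa^+ = \mf b(\kappa) = \mf b_{cl}(\kappa)$, and build by recursion of length $\kappa^+$ a maximal eventually different family (respectively, family of permutations, respectively, subgroup of $S(\kappa)$) of size $\kappa^+$, so that at stage $\xi$ one ``kills'' the potential extension coded by an enumeration of all relevant candidate objects seen so far, using $f_\xi$ together with a pressing-down argument to defeat it. For the lower bound one must of course also argue $\mf a_e(\kappa), \mf a_p(\kappa) \geq \kappa$ (no maximal eventually different family can be of size $<\kappa$, which is standard: $\kappa$-many functions never cover $\kappa^\kappa$ eventually-differently), and $\mf a_g(\kappa)\geq\kappa$ similarly; so the content is producing a \emph{maximal} such family of size exactly $\kappa^+$.

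\textbf{First part: $\mf a_e(\kappa)=\mf a_p(\kappa)=\kappa^+$.} I would first do $\mf a_e$. Fix a $\leq_{cl}$-unbounded $\mc F=\{f_\xi:\xi<\kappa^+\}$. Recursively construct eventually different $g_\xi\in\kappa^\kappa$ for $\xi<\kappa^+$ together with a bookkeeping function so that every $h\in\kappa^\kappa$ which is eventually different from all $g_\eta$, $\eta<\kappa^+$, is anticipated: at stage $\xi$, given the candidate $h$ assigned by bookkeeping (an element of the closure of the construction so far — here one uses that at intermediate stages we only have $\leq\kappa^+$ many relevant reals, and a suitable enumeration), we must choose $g_\xi$ eventually different from all previous $g_\eta$ \emph{and} agreeing with $h$ on a set of size $\kappa$, thereby ruling $h$ out of any maximal extension. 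The key combinatorial step, exactly as in \cite{dilip}, is: given $<\kappa^+$ many functions $\{g_\eta:\eta<\xi\}$ (so $\leq\kappa$ many after reindexing, or organize the recursion so at each step only boundedly-in-$\kappa^+$ but we handle them via $f_\xi$) pairwise eventually different, and a target $h$, use $f_\xi$ to define $g_\xi(\alpha)$ for $\alpha<\kappa$ by choosing a value that (a) equals $h(\alpha)$ for stationarily many $\alpha$ and (b) avoids $\{g_\eta(\alpha):\eta<\xi\}$ — possible because that set has size $<\kappa$ for club-many $\alpha$ once $|\xi|<\kappa$, and Fodor/pressing-down handles the stationary bookkeeping of the $\kappa^+$-many constraints via $\leq_{cl}$-unboundedness of $f_\xi$ over the ``diagonal'' function recording when old values get large. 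For $\mf a_p(\kappa)$ one repeats this but must ensure each $g_\xi$ is a permutation; the extra work is a back-and-forth bookkeeping so that each $g_\xi$ is a bijection while still being eventually different from the previous ones and meeting $h$ $\kappa$-often — this is routine but needs care to keep surjectivity, typically by interleaving a second enumeration of $\kappa$ as ``required values to hit''. One also needs $h$ ranging over permutations only in the $\mf a_p$ case, which makes the killing requirement weaker, hence easier.

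\textbf{Second part: $\mf a_g(\kappa)=\kappa^+$ assuming $2^{<\lambda}=\lambda$.} Here one builds an increasing chain of subgroups $G_\xi\leq S(\kappa)$, $\xi<\kappa^+$, each generated by $\leq\kappa$ permutations, pairwise almost disjoint (any two distinct elements agree on $<\kappa$-many points — equivalently every non-identity element of $G_\xi$ has $<\kappa$-many fixed points), with $G=\bigcup_\xi G_\xi$ of size $\kappa^+$, arranged to be maximal among almost disjoint subgroups. At stage $\xi$ one is handed a candidate permutation $\pi$ and must either show $\langle G_{<\xi},\pi\rangle$ fails to be almost disjoint, or enlarge to $G_\xi\ni g_\xi$ so that $g_\xi$ clashes with $\pi$ (agrees on $\kappa$-many points) while $\langle G_{<\xi},g_\xi\rangle$ stays almost disjoint. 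The hypothesis $2^{<\lambda}=\lambda$ is what lets the bookkeeping enumerate, at each stage, the at most $\kappa$ ``words'' in the generators (since a group generated by $\leq\kappa=\lambda^+$ elements has size $\leq\kappa$ and the relevant subgroups at stage $\xi<\kappa^+$ have size $\leq\kappa$, while $2^{<\lambda}=\lambda$ guarantees $\lambda^{<\lambda}$-type counting so that the set of candidate $\pi$'s to anticipate has size $\kappa^+$ and can be threaded through a $\kappa^+$-length recursion), and crucially to verify the ``fresh'' generator $g_\xi$ produces a group all of whose nontrivial elements have few fixed points — one analyzes a reduced word $w(g_\xi)$ and, using $f_\xi$ and Fodor again, shows its fixed-point set is non-stationary. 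This freeness/almost-disjointness preservation under adding one generator is the technical heart and mirrors the construction of maximal cofinitary groups on $\omega$ (Zhang, and the Fischer--et al. tradition), now relativized to $\kappa$ with the $\leq_{cl}$-unbounded family replacing genericity.

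\textbf{Main obstacle.} The genuinely hard step is the group case: ensuring that adjoining the new permutation $g_\xi$ to the previously constructed almost disjoint subgroup again yields an almost disjoint subgroup — i.e.\ controlling the fixed-point sets of \emph{all} reduced words $w(g_\xi, h_1,\dots,h_n)$ simultaneously, not just of $g_\xi$ itself. This requires a careful word-by-word construction of $g_\xi$ (defining it on longer and longer initial segments of $\kappa$ while respecting the constraints imposed by every word of bounded length seen so far), combined with the $\leq_{cl}$-unboundedness of the $f_\xi$'s to absorb the stationarily-many potential coincidences, and this is where the hypothesis $\mf b(\kappa)=\kappa^+$ together with $2^{<\lambda}=\lambda$ is used most essentially. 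The eventually-different-functions case ($\mf a_e$) is comparatively soft — essentially a direct transcription of \cite{dilip} with ``agrees on a large set'' replacing ``is not almost disjoint from'' — and the permutation case ($\mf a_p$) sits in between, its only subtlety being the bijectivity bookkeeping.
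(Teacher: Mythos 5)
Your overall architecture --- a recursion of length $\kappa^+$ in which a bookkeeping device hands you, at stage $\xi$, a candidate $h$ that must be ``killed'' by making the new function agree with it on a set of size $\kappa$ --- does not work under the stated hypotheses, and it is not what the paper does. To guarantee maximality by anticipation you would need every $h\in\kappa^\kappa$ (respectively every $\pi\in S(\kappa)$) that could witness non-maximality to appear in your enumeration; there are $2^\kappa$ such candidates, your recursion has length $\kappa^+$, and nothing in the hypotheses gives $2^\kappa=\kappa^+$. Your parenthetical claim that ``at intermediate stages we only have $\leq\kappa^+$ many relevant reals'' is exactly the gap: the functions threatening maximality are arbitrary elements of $\kappa^\kappa$, not members of any closure of the construction, and no such reduction is available. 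The same objection applies verbatim to your group case, and your proposed role for $2^{<\lambda}=\lambda$ (counting candidate $\pi$'s to be threaded through the recursion) is therefore also off target.

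The paper's construction is oblivious rather than anticipatory, and this is where the successor hypothesis $\kappa=\lambda^+$ --- which your proposal never actually uses --- does its work. At stage $\delta<\kappa^+$ one adds not one but $\lambda$ many functions $h_{\delta,\zeta}$, $\zeta<\lambda$: at each coordinate $\mu$, using a fixed bijection $e_\delta:\kappa\to\delta$ one forms the set $H_\delta(\mu)$ of values at $\mu$ of the previously built functions whose index lies in $\ran(e_\delta\uhr\mu)$, and $h_{\delta,\zeta}(\mu)$ is declared to be the $\zeta$-th element (with respect to a fixed auxiliary enumeration in type $\lambda$) of $f^*_\delta(\mu)\setm H_\delta(\mu)$, where $f^*_\delta(\mu)\geq f_\delta(\mu)$. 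Maximality is then proved after the fact: given any $h$, pick $\delta$ with $\{\mu:h(\mu)<f_\delta(\mu)\}$ stationary; either $h(\mu)\in H_\delta(\mu)$ stationarily often, and Fodor stabilizes which old function is being hit, or $h(\mu)\in f^*_\delta(\mu)\setm H_\delta(\mu)$ stationarily often, and then its position $\zeta_\mu<\lambda$ stabilizes on a stationary set because $\lambda<\cf(\kappa)$, so $h$ meets some $h_{\delta,\zeta}$ on a set of size $\kappa$. This positional trick, which handles all $2^\kappa$ candidates at once, is the key idea your plan is missing. (In the group case $2^{<\lambda}=\lambda$ is used to take $<\lambda$-closed elementary submodels of size $\lambda$, so that the partial generators built during a length-$\lambda$ back-and-forth, and the subgroup they generate, are elements of the model and hence of size $<\kappa$ there --- not for any counting of candidates.)
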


This is a strengthening of \cite[Theorem 2.2]{blass}, where $\mf d(\kappa)=\kappa^+$ implies $\mf a_e(\kappa)=\kappa^+$ was proved for successor $\kappa$, and also of \cite[Theorem 4]{hytt} where  $\mf b(\kappa)=\kappa^+$ implies $\mf a_e(\kappa)=\kappa^+$ was proved using additional assumptions.


\begin{proof} Let $\{f_\delta:\delta<\kappa^+\}$ witness $\mf b_{cl}(\kappa)=\kappa^+$. Also, fix bijections $e_\delta:\kappa\to \delta$  where $\kappa\leq\delta<\kappa^+$ and bijections $d_\alpha:\alpha\to \lambda$ where  $\lambda\leq\alpha<\kappa$. The latter will allow us, given some $H\subseteq \alpha<\kappa$ with $|H|=\lambda$ and $\zeta<\lambda$, to select the $\zeta^{th}$ element of $H$ with respect to $d_\alpha$; that is, to pick $\beta\in H$ so that $d_\alpha(\beta)\cap d_\alpha[H]$ has order type $\zeta$.

\medskip

\subsection*{Let us start with $\mf a_e(\kappa)=\kappa^+$.} We will define functions $h_{\delta,\zeta}\in \kappa^\kappa$ for $\delta<\kappa^+,\zeta<\lambda$ that will form our maximal eventually different family.

We go by induction on $\delta<\kappa^+$. For each $\mu<\kappa$, let $\mb H_\delta(\mu)=\{h_{\delta',\zeta'}:\delta'\in \ran (e_\delta\uhr \mu),\zeta'<\lambda\}$. 
Note that $$H_\delta(\mu)= \{h(\mu):h\in \mb H_\delta(\mu)\}$$ has size $<\kappa$, so we can define $$f^*_\delta(\mu)=\max\{f_\delta(\mu),\min\{\alpha<\kappa:|\alpha\setm H_\delta(\mu)|=\lambda\}\}.$$

We define $h_{\delta,\zeta}(\mu)$ to be the $\zeta^{th}$ element of $ f^*_\delta(\mu)\setm H_\delta(\mu)$ with respect to $d_{f^*_\delta(\mu)}$.

\begin{clm}
   $\mb H=\{h_{\delta,\zeta}:\delta<\kappa^+,\zeta<\lambda\}\subs \kappa^\kappa$ is eventually different.
\end{clm}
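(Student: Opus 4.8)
The plan is to show that any two distinct functions $h_{\delta_0,\zeta_0}$ and $h_{\delta_1,\zeta_1}$ in $\mb H$ agree in fewer than $\kappa$ places. First I would reduce to the case $\delta_0 < \delta_1$ (or $\delta_0 = \delta_1$ with $\zeta_0 \neq \zeta_1$). The key observation is the nesting structure built into the construction: for a fixed $\delta_1$, once $\mu$ is large enough that $\delta_0 \in \ran(e_{\delta_1}\uhr\mu)$, the value $h_{\delta_0,\zeta_0}(\mu)$ lies in $H_{\delta_1}(\mu)$ by definition of $\mb H_{\delta_1}(\mu)$, whereas $h_{\delta_1,\zeta_1}(\mu)$ was explicitly chosen from $f^*_{\delta_1}(\mu)\setm H_{\delta_1}(\mu)$, hence outside $H_{\delta_1}(\mu)$. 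So $h_{\delta_0,\zeta_0}(\mu) \neq h_{\delta_1,\zeta_1}(\mu)$ for all such $\mu$.

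The point then is that $\{\mu<\kappa : \delta_0 \notin \ran(e_{\delta_1}\uhr\mu)\}$ is an initial segment of $\kappa$ (since $e_{\delta_1}$ is a bijection $\kappa\to\delta_1$, there is a single $\mu_0<\kappa$ with $e_{\delta_1}(\mu_0)=\delta_0$, and $\delta_0\in\ran(e_{\delta_1}\uhr\mu)$ precisely when $\mu>\mu_0$), which has size $<\kappa$ as $\kappa$ is regular. Thus $h_{\delta_0,\zeta_0}$ and $h_{\delta_1,\zeta_1}$ agree on a set of size at most $|\mu_0+1| < \kappa$, giving eventual difference in the $\delta_0 < \delta_1$ case.

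For the remaining case $\delta_0 = \delta_1 = \delta$ but $\zeta_0 \neq \zeta_1$, I would note that for \emph{every} $\mu<\kappa$ the two values $h_{\delta,\zeta_0}(\mu)$ and $h_{\delta,\zeta_1}(\mu)$ are the $\zeta_0^{th}$ and $\zeta_1^{th}$ elements of the \emph{same} set $f^*_\delta(\mu)\setm H_\delta(\mu)$ with respect to the \emph{same} well-order $d_{f^*_\delta(\mu)}$; since $\zeta_0\neq\zeta_1$ these are distinct elements, so the two functions disagree everywhere. Here one uses that $f^*_\delta(\mu)$ was defined precisely so that $|f^*_\delta(\mu)\setm H_\delta(\mu)| = \lambda$, guaranteeing that all $\lambda$-many values $h_{\delta,\zeta}(\mu)$, $\zeta<\lambda$, are well-defined and pairwise distinct.

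I do not expect a serious obstacle here — the construction is tailored so that the claim falls out of bookkeeping. The only mild subtlety is being careful that the two indexing bijections $e_\delta$ and $d_\alpha$ are used correctly: $e_{\delta_1}$ controls \emph{when} (i.e. for which $\mu$) the earlier function enters the forbidden set $H_{\delta_1}(\mu)$, while $d_{f^*_\delta(\mu)}$ controls \emph{which} element of the complement is picked, and it is the combination — "enters after a bounded stage" plus "picked outside the forbidden set" — that yields the result. I would present the two cases separately and keep the argument to a few lines each.
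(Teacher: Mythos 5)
Your proposal is correct and follows essentially the same argument as the paper: for $\delta_0<\delta_1$ one uses that $h_{\delta_1,\zeta_1}(\mu)$ is chosen outside $H_{\delta_1}(\mu)$, which contains $h_{\delta_0,\zeta_0}(\mu)$ once $\delta_0\in\ran(e_{\delta_1}\uhr\mu)$, and for $\delta_0=\delta_1$ one uses that distinct $\zeta$'s pick distinct elements of the same set. Your explicit remark that the exceptional set of $\mu$'s is the bounded initial segment $\mu_0+1$ is a small but welcome elaboration that the paper leaves implicit.
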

\begin{proof} For a fixed $\delta$ and $\zeta<\zeta'<\lambda$, $h_{\delta,\zeta}(\mu)\neq h_{\delta,\zeta'}(\mu)$ by definition for all $\mu<\kappa$.

Given $\delta'<\delta$, and $\zeta,\zeta'<\lambda$, whenever $\delta'\in \ran (e_\delta\uhr \mu)$, then $h_{\delta',\zeta'}\in \mb H_\delta(\mu)$ and so  $h_{\delta',\zeta'}(\mu)\neq h_{\delta,\zeta}(\mu)$, since $h_{\delta,\zeta}\notin H_\delta(\mu)$.
\end{proof}

  \begin{clm}
   $\mb H$ is maximal.
 \end{clm}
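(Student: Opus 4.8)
The plan is to argue by contradiction. Suppose $g\in\kappa^\kappa$ is eventually different from every member of $\mb H$ (in particular $g\notin\mb H$, since no function is eventually different from itself); I will exhibit some $h_{\delta,\zeta}\in\mb H$ that agrees with $g$ on a set of size $\kappa$, a contradiction. The first step is bookkeeping: for each $\delta<\kappa^+$ and $\zeta<\lambda$ the set $\{\mu<\kappa:g(\mu)=h_{\delta,\zeta}(\mu)\}$ has size $<\kappa$ and is hence bounded, so pick $\rho(\delta,\zeta)<\kappa$ above it and put $\rho(\delta)=\sup_{\zeta<\lambda}\rho(\delta,\zeta)$, which is still $<\kappa$ because $\lambda<\kappa=\cf(\kappa)$.

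Next I would use that $\{f_\delta:\delta<\kappa^+\}$ is $\le_{cl}$-unbounded (this being $\mf b_{cl}(\kappa)=\mf b(\kappa)=\kappa^+$): fix $\delta<\kappa^+$, which we may take with $\kappa\le\delta$ so that $e_\delta$ is available, such that $S_0=\{\mu<\kappa:g(\mu)<f_\delta(\mu)\}$ is stationary. Then intersect $S_0$ with the set $C$ of those $\mu<\kappa$ closed under $\nu\mapsto\rho(e_\delta(\nu))$; since this map sends $\kappa$ to $\kappa$ and $\kappa$ is regular and uncountable, $C$ is a club, so $S:=S_0\cap C$ is still stationary.

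The heart of the argument is that $S$ is ``good'' for this single $\delta$. Fix $\mu\in S$. Since $g(\mu)<f_\delta(\mu)\le f^*_\delta(\mu)$ we get $g(\mu)\in f^*_\delta(\mu)$. And for any $\delta'\in\ran(e_\delta\uhr\mu)$, say $\delta'=e_\delta(\nu)$ with $\nu<\mu$, membership of $\mu$ in $C$ gives $\rho(\delta')<\mu$, hence $g(\mu)\ne h_{\delta',\zeta'}(\mu)$ for every $\zeta'<\lambda$; thus $g(\mu)\notin H_\delta(\mu)$. Therefore $g(\mu)\in f^*_\delta(\mu)\setm H_\delta(\mu)$, a set of size $\lambda$, so $g(\mu)$ is its $\zeta_\mu$-th element with respect to $d_{f^*_\delta(\mu)}$ for some $\zeta_\mu<\lambda$ --- that is, $h_{\delta,\zeta_\mu}(\mu)=g(\mu)$. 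Finally, $\mu\mapsto\zeta_\mu$ is a map from the size-$\kappa$ set $S$ into $\lambda<\cf(\kappa)$, so it is constantly equal to some $\zeta<\lambda$ on a subset $S'\subseteq S$ of size $\kappa$; on $S'$ we have $h_{\delta,\zeta}=g$, contradicting that $g$ is eventually different from $h_{\delta,\zeta}\in\mb H$.

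I expect the middle step to be the main obstacle: the bare use of $\le_{cl}$-unboundedness only delivers $g(\mu)\in f^*_\delta(\mu)$, and one also needs $g(\mu)$ to dodge the $<\kappa$-many functions in $\mb H_\delta(\mu)$ on a stationary set. The point to realize is that this dodging becomes automatic once the separation ordinals $\rho(\delta',\zeta')$ are packaged into the single function $\nu\mapsto\rho(e_\delta(\nu))$ and one passes to its club of closure points --- and, crucially, that this works for whichever $\delta$ the unboundedness hands us, with no further cleverness in the choice of $\delta$. Everything after that (the final pigeonhole collapsing the $\zeta_\mu$'s) is routine and uses only that $\lambda<\kappa$ is regular.
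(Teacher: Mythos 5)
Your proof is correct. It shares the paper's skeleton --- use $\mf b_{cl}(\kappa)=\kappa^+$ to pick $\delta$ with $\{\mu<\kappa: g(\mu)<f_\delta(\mu)\}$ stationary, then exploit the fact that $h_{\delta,\zeta}(\mu)$ enumerates $f^*_\delta(\mu)\setminus H_\delta(\mu)$ --- but it handles the troublesome case $g(\mu)\in H_\delta(\mu)$ by a genuinely different device. The paper keeps $h$ arbitrary, splits a stationary set according to whether $h(\mu)\in H_\delta(\mu)$ or not, and in the first case applies Fodor's pressing-down lemma to the regressive map $\mu\mapsto\eta_\mu$ (where $h(\mu)=h_{e_\delta(\eta_\mu),\zeta'_\mu}(\mu)$ with $\eta_\mu<\mu$) to land on a single $h_{\delta',\zeta'}$ agreeing with $h$ stationarily. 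You instead argue by contradiction: the hypothesis that $g$ is eventually different from every member of $\mb H$ lets you bound each agreement set by an ordinal $\rho(\delta',\zeta')$, package these into the function $\nu\mapsto\rho(e_\delta(\nu))$, and pass to the club of its closure points, which eliminates the first case outright; what remains is exactly the paper's second case, settled by pigeonhole on $\lambda<\kappa$. The trade-offs are small: the paper's direct argument proves slightly more (every $h\in\kappa^\kappa$ agrees with some member of $\mb H$ on a stationary set) and showcases the Fodor technique that powers the rest of the section, while your version replaces pressing-down with a closure club and needs only the regularity of $\kappa$. One cosmetic point you already flagged: one must arrange that the chosen $\delta$ is at least $\kappa$ so that $e_\delta$ exists --- justified, e.g., by noting that the first $\kappa$ many $f_\delta$ cannot already witness $\mf b_{cl}(\kappa)=\kappa^+$, or simply by re-indexing; the paper glosses over the same point.
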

\begin{proof}
Fix some $h\in \kappa^\kappa$, and find $\delta<\kappa^+$ so that $$S=\{\mu<\kappa:h(\mu)<f_\delta(\mu)\}$$ is stationary. Now, there is a stationary $S_0\subs S$ so that either
\begin{enumerate}
 \item $h(\mu)\in H_\delta(\mu)$ for all $\mu\in S_0$, or
 \item $h(\mu)\notin H_\delta(\mu)$ for all $\mu\in S_0$.
\end{enumerate}
In the first case, for each $\mu$, we can find some $\delta'=e_\delta(\eta_\mu)$ with $\eta_\mu<\mu$ and $\zeta'=\zeta'_\mu<\lambda$ so that $h(\mu)=h_{\delta',\zeta'}(\mu)$. In turn, by Fodor's lemma, we can find a stationary $S_1\subs S_0$ and single $\delta'=e_\delta(\eta)$ and $\zeta'<\lambda$ so that $h(\mu)=h_{\delta',\zeta'}(\mu)$ for all $\mu\in S_1$; hence, $h\cap h_{\delta',\zeta'}$ has size $\kappa$.

In the second case, $h(\mu)\in f^*_\delta(\mu)\setm H_\delta(\mu)$ must hold too, and so there is a $\zeta_\mu<\lambda$ so that $h(\mu)$ is the $\zeta_\mu^{th}$ element of $  f^*_\delta(\mu)\setm H_\delta(\mu)$ with respect to  $d_{f^*_\delta(\mu)}$. Again, we can find a single $\zeta<\lambda$ and stationary $S_1\subs S_0$ so that $\zeta_\mu=\zeta$ for all $\mu\in S_1$ and so  $h\cap h_{\delta,\zeta}$ has size $\kappa$.

\end{proof}

This shows that $\mb H$ is the desired maximal eventually different family.

\medskip

\subsection*{Now, we proceed with $\mf a_p(\kappa)=\kappa^+$.} 
We will modify the previous argument to ensure $h_{\delta,\zeta}\in S(\kappa)$ and to keep the family maximal in $S(\kappa)$. 
Let $\bar e=\{e_\delta:\kappa\leq\delta<\kappa^+\}$, $\bar{d}=\{ d_\alpha:\lambda\leq\alpha<\kappa\}$ ,
$\bar{f}=\{ f_\delta:\delta<\kappa^+\}$. We will need some elementary submodels: for each $\delta<\kappa^+$, we fix a continuous, increasing sequence of elementary submodels $\bar N^\delta=(N^\delta_\eta)_{\eta<\kappa}$ of some $H(\theta)$ so that
\begin{enumerate}[(i)]
\item $|N^\delta_\eta|=\lambda$, and $N^\delta_\eta\cap \kappa\in \kappa$,
 \item $\delta, \bar e,\bar d, \bar f\in N^\delta_\eta$,
 \item $\delta'\in N^\delta_\eta$ implies $\bar N^{\delta'}\in N^\delta_\eta$.
\end{enumerate}

Let $E_\delta=\{N^\delta_\eta\cap \kappa:\zeta<\kappa\}\cup \{0\}$ which is a club in $\kappa$.

Again, we proceed by induction on $\delta$, but use the notation $\mb H_\delta(\nu)$ and $H_\delta(\nu)$ with minor modifications: $\mb H_\delta(\nu)=\{h_{\delta',\zeta'}:\delta'\in \ran (e_\delta\uhr \mu),\zeta'<\lambda\}$ where $\mu=\sup(E_\delta\cap \nu)\leq \nu$, and $$H_\delta(\nu)= \{h(\nu):h\in \mb H_\delta(\nu)\}.$$ So $\mb H_\delta(\nu)=\mb H_\delta(\mu)$ but $H_\delta(\nu)$ and $H_\delta(\mu)$ are typically different (where $\mu=\sup(E_\delta\cap \nu)$).

We construct $h_{\delta,\zeta}$ for $\zeta<\lambda$ so that

\begin{enumerate}
 \item $h_{\delta,\zeta}\uhr [\mu,\mu^+)\in S([\mu,\mu^+))$ for any successive elements $\mu,\mu^+$ of $E_\delta$,
 \item\label{ev1} $h_{\delta,\zeta}\cap h_{\delta,\zeta'}=\emptyset$ for $\zeta'<\zeta$,
 \item\label{ev2} $h_{\delta,\zeta}(\nu)\in \kappa\setm H_\delta(\nu)$,
 \item\label{max} $h_{\delta,\zeta}(\mu)$ is the $\zeta^{th}$ element of $ f^*_\delta(\mu)\setm (H_\delta(\mu)\cup \mu)$ with respect to $d_{f^*_\delta(\mu)}$, where$$f^*_\delta(\mu)=\max\{f_\delta(\mu),\min\{\alpha<\kappa:|\alpha\setm (H_\delta(\mu)\cup \mu)|=\lambda\}\},$$
 \item\label{elem} $(h_{\delta,\zeta})_{\zeta<\lambda}$ is uniquely definable from $\bar N^\delta$.
\end{enumerate}

These conditions clearly ensure that $h_{\delta,\zeta}\in S(\kappa)$, and as before, the family $\{h_{\delta',\zeta'}:\delta'\leq \delta,\zeta'<\lambda\}$ remains eventually different by conditions (\ref{ev1}) and (\ref{ev2}). Maximality, just as before, follows from  condition (\ref{max}) and Fodor's lemma.

Let us show that we can actually construct functions with the above properties. Fix  successive elements $\mu<\mu^+$ of $E_\delta$, and we define $h_{\delta,\zeta}\uhr [\mu,\mu^+)\in S([\mu,\mu^+))$ by an induction in $\lambda$ steps.   We list all triples from $\lambda\times [\mu,\mu^+)\times 2$ as $(\zeta_\xi,\nu_\xi,i_\xi)$ for $\xi<\lambda$.

First of all, let $\mu=\kappa\cap N^\delta_\eta$ and $\mu^+=\kappa\cap N^\delta_{\eta+1}$; we will write $N$ for $N^\delta_{\eta+1}$ temporarily.
\begin{clm}
 $\mu^+\setm (H_\delta(\nu)\cup \mu)$ has size $\lambda$ for all $\nu\in \mu^+\setm \mu$.
\end{clm}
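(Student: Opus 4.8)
The plan is to locate $H_\delta(\nu)$ as an \emph{element} of the model $N^\delta_{\eta+1}$ and then exploit that $N^\delta_{\eta+1}\cap\kappa=\mu^+$ is an ordinal. Throughout, write $\rho=\sup(E_\delta\cap\nu)\le\mu$, so that $\mb H_\delta(\nu)=\{h_{\delta',\zeta'}:\delta'\in\ran(e_\delta\uhr\rho),\ \zeta'<\lambda\}$; note $\rho=\mu$ whenever $\nu\in(\mu,\mu^+)$. Since $\lambda\in N^\delta_\eta$ (it is coded by $\bar d$), we get $\lambda<\mu<\mu^+$ and $|\mu|=|\mu^+|=\lambda$. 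As $e_\delta$ is injective, $\mb H_\delta(\nu)$, and hence $H_\delta(\nu)$, has size at most $|\mu|\cdot\lambda=\lambda$; since $H_\delta(\nu)\cup\mu\subseteq\mu^+$ and $|\mu^+|=\lambda$, the set in the claim has size $\le\lambda$, so it suffices to produce $\lambda$ of its elements.

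The heart of the argument is that $H_\delta(\nu)\in N^\delta_{\eta+1}$. Since $\rho\le\mu<\mu^+$, the ordinals $\rho,\mu,\nu$ and the function $e_\delta\uhr\rho$ all lie in $N^\delta_{\eta+1}$. For each $\delta'=e_\delta(\gamma)\in\ran(e_\delta\uhr\rho)$ we have $\gamma<\rho\le\mu=N^\delta_\eta\cap\kappa$, so $\gamma,e_\delta\in N^\delta_\eta$ yields $\delta'\in N^\delta_\eta$, whence $\bar N^{\delta'}\in N^\delta_\eta\subseteq N^\delta_{\eta+1}$ by (iii). By condition (\ref{elem}), each sequence $(h_{\delta',\zeta'})_{\zeta'<\lambda}$ is uniformly definable from $\bar N^{\delta'}$ together with the fixed parameters $\bar e,\bar d,\bar f\in N^\delta_{\eta+1}$; feeding these definitions into $N^\delta_{\eta+1}$ along the index set $\ran(e_\delta\uhr\rho)\in N^\delta_{\eta+1}$ gives $\mb H_\delta(\nu)\in N^\delta_{\eta+1}$, and then $H_\delta(\nu)=\{h(\nu):h\in\mb H_\delta(\nu)\}\in N^\delta_{\eta+1}$ because $\nu\in N^\delta_{\eta+1}$.

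Finally I would close with an end-segment argument. Because $|H_\delta(\nu)|\le\lambda<\kappa=\cf(\kappa)$, the set $H_\delta(\nu)$ is bounded in $\kappa$, so by elementarity there is $\beta\in N^\delta_{\eta+1}$ with $H_\delta(\nu)\cup\mu\subseteq\beta<\kappa$; as $\beta\in N^\delta_{\eta+1}\cap\kappa=\mu^+$, we get $\beta<\mu^+$. Then $\beta+\lambda\in N^\delta_{\eta+1}$ (since $\lambda\in N^\delta_{\eta+1}$) and $\beta+\lambda<\kappa$ (since $\kappa=\lambda^+$ is regular), hence $\beta+\lambda<\mu^+$, and the interval $[\beta,\beta+\lambda)$ is a subset of $\mu^+\setm(H_\delta(\nu)\cup\mu)$ of size $\lambda$, as needed. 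The main obstacle is the middle step: one must make sure that the chains $\bar N^{\delta'}$ for the relevant $\delta'$ — together with the parameters $\bar e,\bar d,\bar f$ of condition (\ref{elem}) — are available to $N^\delta_{\eta+1}$ \emph{as a unit}, so that $H_\delta(\nu)$ genuinely becomes an element, not merely a subset, of $N^\delta_{\eta+1}$ (this is precisely what forces $\sup H_\delta(\nu)<\mu^+$); the remaining bookkeeping with the clubs $E_\delta$ — e.g. that $\sup(E_\delta\cap\nu)=\mu$ for $\nu\in(\mu,\mu^+)$, and the degenerate case $\nu=\mu$ — is routine.
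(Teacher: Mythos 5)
Your proposal is correct and follows essentially the same route as the paper: locate $H_\delta(\nu)$ as an element of $N=N^\delta_{\eta+1}$ via $e_\delta\uhr\mu\in N$, condition (iii) and condition (\ref{elem}), and then use $N\models|H_\delta(\nu)|<\kappa$ together with $N\cap\kappa=\mu^+$ to find $\lambda$ many points of $\mu^+$ above $H_\delta(\nu)\cup\mu$. You spell out the end-segment argument and flag the uniformity issue about the assignment $\delta'\mapsto\bar N^{\delta'}$ more explicitly than the paper does, but the underlying idea is identical.
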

\begin{proof}

Note that $e_\delta\uhr \mu\in N$ and so $\ran (e_\delta\uhr \mu)$ is an element and subset of $N$. Furthermore, we can apply (\ref{elem}) to see that $\mb H_\delta(\mu)\in N$ and so $H_\delta(\nu)\in N$ for any $\nu<\mu^+$. Moreover, $N\models |H_\delta(\nu)|<\kappa$ so $\mu^+\setm (H_\delta(\nu)\cup \mu)$ has size $\lambda$.
\end{proof}

In turn, since $f_\delta(\mu)\in N$ as well, the value  $f^*_\delta(\mu)$ in condition (\ref{max}) is well defined and $<\mu^+$.

Now, we can start our induction on $\xi<\lambda$ by partial functions $h_{\delta,\zeta}$, each defined only at $\mu$ to satisfy condition (\ref{max}). At step $\xi$, we do the following. Let $\zeta=\zeta_\xi, \nu=\nu_\xi$; if $i_\xi=0$ then we make sure that $\nu$  gets into the domain of $h_{\delta,\zeta}$, and if  $i_\xi=1$ then we make sure that $\nu$ is in the range of $h_{\delta,\zeta}$.

Suppose $i_\xi=0$. We need to find a value for $h_{\delta,\zeta}(\nu)$ which is in $\mu^+\setm (H_\delta(\nu)\cup \mu)$ and which also avoids $h_{\delta,\zeta'}(\nu)$ where $\zeta'=\zeta_{\xi'}$ for some $\xi'<\xi$. The set   $\mu^+\setm (H_\delta(\nu)\cup \mu)$ has size $\lambda$ (using that $H_\delta(\nu)\in N$ as before), and we only defined $<\lambda$ many functions so far, hence we can find a (minimal) good choice.

Next, if $i_\xi=1$ then we need to find some $\vartheta\in \mu^+\setm \mu$ so that $h(\vartheta)\neq \nu$ for $h\in \mb H_\delta(\mu)$ and $h_{\delta,\zeta'}(\vartheta)\neq \nu$ for all $\zeta'=\zeta_{\xi'}$ for some $\xi'<\xi$. First, $\mb H_\delta(\mu)\in N$ and has size $<\kappa$ so the set of good choices $$\mu^+\setm (\mu \cup \{\vartheta<\kappa:h(\vartheta)=\nu,h\in \mb H_\delta(\mu)\}$$ still has size $\lambda$ by elementarity. Each $h_{\delta,\zeta'}$
 introduces $\leq 1$ bad $\vartheta$, and we have $\leq |\xi|<\lambda$ many of these, so we can find a good (minimal) $\vartheta$.

 If we carry out all this work in $N^\delta_{\eta+2}$, always taking minimal choices, then in the end condition (\ref{elem}) is preserved as well.

 \medskip

\subsection*{Finally, we turn to the proof of  $\mf a_g(\kappa)=\kappa^+$.} We use the additional assumption that $2^{<\lambda}=\lambda$. We keep the notations $\mb H_\delta(\nu),H_\delta(\nu)$ from the previous section, as well as the elementary submodels. However, we can now assume that each successor model $N^\delta_{\eta+1}$ is $<\lambda$-closed. This will help us when we are constructing the functions $h_{\delta,\zeta}$ in the induction of length $\lambda$, because at each intermediate step $\xi$, the model $N^\delta_{\xi+1}$ will contain all the functions which we constructed so far (there was no reason for this to hold before).

So, our aim now is to construct $\mb H = \{h_{\delta,\zeta}:\delta<\kappa^+,\zeta<\lambda\}\subs S(\kappa)$, so that in the generated subgroup $\mb G=\langle \mb H\rangle $, only the identity has $\kappa$ fixed points and  $\mb G$ is maximal. We use the notation $$\mb G_\delta(\nu)=\langle \mb H_\delta(\nu)\rangle \textmd{ and } G_\delta(\nu)=\{g(\nu):g\in \mb G_\delta(\nu)\}$$ for $\delta<\kappa^+$ and $\nu<\kappa$.

 We go by induction on $\delta$ as before, and  construct $h_{\delta,\zeta}$ so that
 \begin{enumerate}
 \item $h_{\delta,\zeta}\uhr [\mu,\mu^+)\in S([\mu,\mu^+))$ for any successive elements $\mu,\mu^+$ of $E_\delta$,
 \item\label{fix} any fixed point of a non identity function $h\in \langle \mb H_\delta(\mu)\cup \{h_{\delta,\zeta}\uhr \mu^+:\zeta<\lambda\}\rangle $ is below $\mu$,

 \item\label{elemm} $(h_{\delta,\zeta})_{\zeta<\lambda}$ is uniquely definable from $\bar N^\delta$.
\end{enumerate}
These conditions ensure that only the identity in $\mb G$ has $\kappa$ fixed points. Indeed, suppose $g\in \mb G$ is not the identity and write it as a finite product of $h_{\delta,\zeta}$ functions. Let $\delta_1$ be the maximal $\delta$ that occurs; if no other $\delta$ is in this product then $g$ has no fixed points by (\ref{fix}). If $\delta_0$ is the maximum of all other $\delta$'s that occur then we can find a $\mu<\kappa$ so that $\delta_0\in \ran(e_\delta\uhr \mu)$ and so (\ref{fix}) implies that all fixed points of $g$ are below $\mu$.

As before, we fixed some $\mu<\mu^+$, and $h_{\delta,\zeta}$ is constructed by an induction of length $\lambda$, using an enumeration of all triples from $\lambda\times [\mu,\mu^+)\times 2$ as $(\zeta_\xi,\nu_\xi,i_\xi)$ for $\xi<\lambda$.

We start by empty functions now, and at step $\xi$, we either need to put $\nu=\nu_\xi$ into the domain of $h_{\delta,\zeta}$  or into the range of $h_{\delta,\zeta}$ (where $\zeta=\zeta_\xi$).

Lets look at the first case: in order to preserve (\ref{fix}), it suffices to ensure that $h_{\delta,\zeta}(\nu)\neq h(\nu)$ for any $$h\in Z=\langle \mb H_\delta(\mu)\cup \{h_{\delta,\zeta'}:\zeta'=\zeta_{\xi'},\; \xi'<\xi\}\rangle$$ whenever $h(\nu)$ can be computed.

The maps $h_{\delta,\zeta'}$ are some partial functions on $\mu^+$ that extend $h_{\delta,\zeta'}\uhr \mu$ by $<\lambda$ many new values. Since $N=N^\delta_{\xi+1}$ now contains these functions as well as the set $\{h_{\delta,\zeta'}:\zeta'=\zeta_{\xi'},\; \xi'<\xi\}$,  it also contains the set $Z$ (we applied that $N$ is $<\lambda$-closed and the inductive hypothesis (\ref{elemm})). So, since $$N\models |\{h(\nu):h\in Z\}|<\kappa,$$ we can take  $h_{\delta,\zeta}(\nu)=\min \mu^+\setm(\{h(\nu):h\in Z\}\cup \mu)$.

  To ensure maximality in the end, we consider the case $\nu=\mu$ separately. Now, we don't just take a minimal good choice but look at the minimal $\alpha\geq f_\delta(\mu)$ so that $\alpha \setm (\{h(\mu):h\in Z\}\cup \mu)$ has size $\lambda$. Since $Z\in N$ and $N\models |Z|<\kappa$,  $\alpha\in N$ too. Now,  we define $h_{\delta,\zeta}(\mu)$ to be the $\zeta^{th}$ element of $\alpha \setm \{h(\mu):h\in Z\}$ with respect to $d_\alpha$.

Second, to put $\nu$ in the range of $h_{\delta,\zeta}$: we need some $\vartheta\in \mu^+\setm \mu$ so that $h(\vartheta)\neq \nu$ for any $h\in Z$ (and then we can set $h_{\delta,\zeta}(\theta)=\nu$). Again, $N\models |Z|<\kappa$ and each $h\in Z$ contributes with at most one bad $\vartheta$ so we can pick a minimal $\vartheta$ that works.

 It is left to check that we constructed a maximal $\mb G$. Fix any $g\in S(\kappa)\setm \mb G$ and find $\delta<\kappa^+$ so that $S=\{\mu<\kappa:g(\mu)<f_\delta(\mu)\}$ is stationary. Now, there is a stationary $S_0\subs S$ so that either
  \begin{enumerate}
   \item $g(\mu)=h(\mu)$ for some $\langle H_\delta(\mu)\cup \{h_{\delta,\zeta}:\zeta<\lambda\}\rangle$ for all $\mu\in S_0$, or
   \item $g(\mu)\neq h(\mu)$ for all $\langle H_\delta(\mu)\cup \{h_{\delta,\zeta}:\zeta<\lambda\}\rangle$ for all $\mu\in S_0$.
  \end{enumerate}
In the first case, we can use Fodor's theorem to fix a single $h\in \mb G$ so that $g\cap h$ has size $\kappa$. In the latter, there is some $\zeta<\lambda$ so that $g\cap h_{\delta,\zeta}$ has size $\kappa$ (just as in the previous proofs).

\end{proof}

We do not know at this point if our theorem is true without the assumption of $\kappa$ being successor, nor how to remove $2^{<\lambda}=\lambda$ from the last part of the result.

\section{Reaping and dominating numbers}\label{sec:dr}

\subsection*{Some background} Let us recall a few more invariants, first the dual of the bounding number:

\begin{enumerate}[(i)]
\item the \emph{dominating number} $\mf d(\kappa)$ is the minimal size of a family $\mc F\subs \kappa^\kappa$ which is $\leq^*$-dominating;
\item the \emph{club-dominating number} $\mf d_{cl}(\kappa)$ is the minimal size of a family $\mc F\subs \kappa^\kappa$ which is $\leq_{cl}$-dominating.\footnote{Recall that $f\leq_{cl} g$ if $\{\alpha<\kappa:f(\alpha)\leq g(\alpha)\}$ contains a club.}
\end{enumerate}

Second, we will look at the \emph{reaping and splitting numbers}. We say that $B$ \emph{splits} $A$ if $|A\cap B|=|A\setm B|=\kappa$. 

\begin{enumerate}[(i)]

\item $\mf r(\kappa)$ is the minimal size of a family $F\subs [\kappa]^\kappa$ so that no single $B\subs \kappa$ splits all $A\in F$;
\item $\mf s(\kappa)$ is the minimal size of a family $F\subs [\kappa]^\kappa$ so that any $A\in [\kappa]^\kappa$ is split by some $B\in F$.
\end{enumerate}

It was proved by Cummings and Shelah that $$\mf d_{cl}(\kappa)\leq \mf d(\kappa)\leq \cf([\mf d_{cl}(\kappa)]^\oo),$$ and $\mf d(\kappa)= \mf d_{cl}(\kappa)$ whenever $\kappa\geq \beth_\oo$ \cite{cummings}. It is not known if the latter assumption is necessary.

\medskip

To the surprise of many, Raghavan and Shelah \cite{dilip2} proved that $\mf s(\kappa)\leq \mf b(\kappa)$ for any uncountable, regular $\kappa$ (a result that consistently fails for $\kappa=\oo$).\footnote{Even before this result, it was known that $\mf s(\kappa)$ behaves very interestingly for an uncountable $\kappa$. $\mf s(\kappa)\geq \kappa$ iff $\kappa$ is weakly inaccessible, $\mf s(\kappa)> \kappa$ iff $\kappa$ is weakly compact, and  $\mf s(\kappa)>\kappa^+$ is equiconsistent with the existence of a measurable cardinal $\mu$ with Mitchell order at least $\mu^{++}$ \cite{omer, suzuki, zapletal}.} 
In fact, they prove that $$\mf s(\kappa)\leq\mf p_{cl}(\kappa)\leq \mf b(\kappa)$$ where $\mf p_{cl}(\kappa)$ is the minimal size of a family of clubs $\mc D$ in $\kappa$ without a pseudo intersection i.e., no $A\in [\kappa]^\kappa$ satisfies $A\subseteq^* D$ for all $D\in \mc D$. 
We remark that $$\kappa^+\leq \mf p(\kappa)\leq \mf t(\kappa) \leq \mf b(\kappa)$$ where $\mf p(\kappa)$ is the minimal size of a family with the $\kappa$-intersection property (i.e., any subfamily of size $<\kappa$ has an intersection of size $\kappa$) but without a pseudo intersection, and $\mf t (\kappa)$ is the minimal size of a $\subseteq^*$-chain with the $\kappa$-intersection property in $[\kappa]^\kappa$ without a pseudo intersection \cite{brook}.\footnote{S. Garti proved that $\mf p(\kappa)=\mf t (\kappa)$ if $\mf p(\kappa)=\kappa^+$ and $\kappa^{<\kappa}=\kappa$ \cite{garti}. Given the recent breakthrough of M. Malliaris and Shelah \cite{malliaris}  proving $\mf p(\oo)=\mf t(\oo)$, it would be interesting to see how much of that machienary can be generalized to uncountable cardinals.} While $\mf p(\kappa)\leq \mf p_{cl}(\kappa)$ clearly holds by definition, we are not aware of results separating these invariants.

\medskip

Most recently, Raghavan and Shelah \cite{dilip} showed the dual inequality $$\mf d(\kappa)\leq \mf r(\kappa)$$ whenever $\kappa\geq \beth_\oo$, and this is where our interest lies. Especially so, that it is not known at this point if the assumption  $\kappa\geq \beth_\oo$ can be removed from their result. 

 Raghavan and Shelah's argument is surprisingly short, and goes as follows. For a set $E\subs \kappa$ and $\xi\in \kappa$, we let $s_E(\xi)=\min E\setm(\xi+1)$. 
 
 Take an arbitrary $F\subs [\kappa]^\kappa$. First, if there is a club $E_1$ so that for any club $E_2\subs E_1$ there is some $A\in F$ so that $A\subs^* \bigcup_{\xi\in E_2}[\xi,s_{E_1}(\xi))$ then $\mf d(\kappa)\leq |F|$. Indeed, the functions $g_A(\xi)=s_A(s_{E_1}(\xi))$ for $A\in F$ must $\leq^*$-dominate.

So, suppose that $F$ has size $\mf r(\kappa)$, and by the previous observation, we can assume the following: for any club $E_1$, there is a club  $E_2\subs E_1$ so that $A\not \subs^* \bigcup_{\xi\in E_2}[\xi,s_{E_1}(\xi))$  for all $A\in F$. In this case we say that $F$ has \emph{property RS} (for Raghavan-Shelah). Let us emphasize this definition.

\begin{dfn}We say that  $F\subs [\kappa]^\kappa$ has property RS if for any club $E_1$, there is a club  $E_2\subs E_1$ so that $A\not \subs^* \bigcup_{\xi\in E_2}[\xi,s_{E_1}(\xi))$  for all $A\in F$.
\end{dfn}

Now, the next observation follows by definition.

\begin{obs}
If $F$ is unsplit and has property RS then for any club $E_1\subs \kappa$, there is a club  $E_2\subs E_1$  and some $A\in F$ so that $A\subs^* \kappa \setm \bigcup_{\xi\in E_2}[\xi,s_{E_1}(\xi))$.
\end{obs}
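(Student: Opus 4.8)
The plan is to simply chase the definitions; there is no real construction to perform. Fix an arbitrary club $E_1\subs\kappa$. Applying property RS to $E_1$, we obtain a club $E_2\subs E_1$ such that $A\not\subs^* \bigcup_{\xi\in E_2}[\xi,s_{E_1}(\xi))$ for every $A\in F$. Write $B=\bigcup_{\xi\in E_2}[\xi,s_{E_1}(\xi))$ for brevity, so that property RS is saying precisely $|A\setm B|=\kappa$ for all $A\in F$.

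Next I would use that $F$ is unsplit, i.e. that $F$ is a reaping family: no single subset of $\kappa$ splits all members of $F$. Apply this to the particular set $B$. Then there is some $A\in F$ which $B$ does \emph{not} split, meaning $|A\cap B|<\kappa$ or $|A\setm B|<\kappa$; since $A\in[\kappa]^\kappa$, this is a genuine dichotomy between $A\subs^*\kappa\setm B$ and $A\subs^* B$. The second alternative says $A\subs^* \bigcup_{\xi\in E_2}[\xi,s_{E_1}(\xi))$, which is exactly what property RS forbids. Hence the first alternative holds: $A\subs^*\kappa\setm B=\kappa\setm\bigcup_{\xi\in E_2}[\xi,s_{E_1}(\xi))$, and together with the club $E_2\subs E_1$ this is the desired conclusion.

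There is essentially no obstacle here — the whole content is unwinding ``unsplit'' into the statement that every set fails to split some member of $F$, and noticing that property RS eliminates one side of the resulting dichotomy. The only point worth stating carefully is that membership in $[\kappa]^\kappa$ makes ``$B$ does not split $A$'' equivalent to the clean disjunction $A\subs^* B$ or $A\subs^*\kappa\setm B$, so that ruling out the former forces the latter.
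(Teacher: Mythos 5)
Your proof is correct and follows essentially the same route as the paper: apply property RS to $E_1$ to get $E_2$ with $|A\setminus B|=\kappa$ for all $A\in F$ (where $B=\bigcup_{\xi\in E_2}[\xi,s_{E_1}(\xi))$), then use unsplitness to find $A\in F$ not split by $B$, and observe that RS rules out $A\subs^* B$, leaving $A\subs^*\kappa\setminus B$. The paper states the same argument contrapositively (if every $A$ met $B$ in a set of size $\kappa$, then $B$ would split $F$), but the content is identical.
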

\begin{proof}

Indeed, given $E_1$ we find $E_2$ using property RS so that $A\not \subs^* \bigcup_{\xi\in E_2}[\xi,s_{E_1}(\xi))$  for all $A\in F$. If $A\cap \bigcup_{\xi\in E_2}[\xi,s_{E_1}(\xi))$ has size $\kappa$ for all $A\in F$ then $\bigcup_{\xi\in E_2}[\xi,s_{E_1}(\xi))$ would split $F$, which contradicts that $F$ is unsplit.
 
\end{proof}

Now, we claim that $\{s_A:A\in F\}$ is $\leq_{cl}$-dominating for an unsplit RS-family. Given $f\in \kappa^\kappa$, take an $f$-closed club $E_1$ and find $E_2\subs E_1$ and $A\in F$ using property RS so that $A\setm \delta \subs \kappa \setm \bigcup_{\xi\in E_2}[\xi,s_{E_1}(\xi))$ for some $\delta<\kappa$. Now, we claim that $f\uhr E_2\setm \delta\leq s_A\uhr E_2\setm \delta$. Indeed, $\xi\in E_2\setm \delta$ implies that $f(\xi)<s_{E_1}(\xi)\leq s_A(\xi)$ since $s_A(\xi)\in A$ and $A\cap [\xi,s_{E_1}(\xi))=\emptyset$.

This proves $$\mf d_{cl}(\kappa)\leq \mf r(\kappa),$$ and in turn,  $\mf d(\kappa)=\mf d_{cl}(\kappa)\leq \mf r(\kappa)$ follows if $\kappa\geq \beth_\oo$ by the Cummings-Shelah result above.
\medskip

\subsection*{New results} Recall that $\mf r_\sigma(\kappa)$ is the minimal size of a family $F\subs [\kappa]^\kappa$ so that there is no countable family $\{B_n:n<\oo\}$ so that any $A\in F$ is split by some $B_n$. It is easy to see that $\mf r_\sigma(\aleph_0)$ exists, however this is not so obvious for an uncountable $\kappa$.

\begin{obs}\cite[Lemma 3]{zapletal}
 If $\aleph_0<\kappa\leq 2^{\aleph_0}$ then there is a countable $\mc B$ that splits all $A\in [\kappa]^\kappa$.
\end{obs}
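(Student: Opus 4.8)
The plan is to code $\kappa$ into the branches of the binary tree and split using the coordinate functions. Since $\kappa \le 2^{\aleph_0}$, fix an injection $\alpha \mapsto x_\alpha$ of $\kappa$ into ${}^\oo 2$, and for each $n<\oo$ put
$$B_n=\{\alpha<\kappa : x_\alpha(n)=1\}.$$
I claim that $\mc B=\{B_n:n<\oo\}$ is as required, i.e.\ every $A\in[\kappa]^\kappa$ is split by some $B_n$.

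Suppose not, and fix $A\in[\kappa]^\kappa$ that no $B_n$ splits. For each $n$ this means $|A\cap B_n|<\kappa$ or $|A\setm B_n|<\kappa$; choose $i_n\in 2$ accordingly so that the set $C_n:=\{\alpha\in A:x_\alpha(n)=1-i_n\}$ has size $<\kappa$. (Here one uses that $\kappa$ is regular and uncountable: if \emph{both} $A\cap B_n$ and $A\setm B_n$ had size $<\kappa$, then $|A|<\kappa$, which is absurd, so at least one coordinate value is "small''.) Let $y=(i_n)_{n<\oo}\in{}^\oo 2$. Now every $\alpha\in A$ with $x_\alpha\ne y$ lies in some $C_n$: pick $n$ with $x_\alpha(n)\ne i_n$, so $x_\alpha(n)=1-i_n$ and $\alpha\in C_n$. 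Hence
$$A\subseteq\{\alpha<\kappa:x_\alpha=y\}\ \cup\ \bigcup_{n<\oo}C_n.$$
The first set has at most one element since $\alpha\mapsto x_\alpha$ is injective, and the union is a countable union of sets of size $<\kappa$; as $\kappa$ is regular and uncountable this forces $|A|<\kappa$, contradicting $A\in[\kappa]^\kappa$.

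I do not anticipate a genuine obstacle here: once the coding is in place the verification is routine, and the only subtlety is the twofold appeal to regularity of $\kappa$ — first to see that for each $n$ at least one "half'' of $A$ is of size $<\kappa$, and then to bound $\bigl|\bigcup_n C_n\bigr|$ below $\kappa$. (In fact the same argument works verbatim for an arbitrary uncountable $\kappa$ with $\cf(\kappa)>\oo$, which is the only place the cofinality is used; and in the standing setting of the paper $\kappa$ is regular and uncountable, so this is exactly what we need.)
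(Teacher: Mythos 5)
Your proof is correct and follows essentially the same route as the paper: code $\kappa$ injectively into $2^\oo$, take the induced countable family (you use the coordinate sets $\{\alpha:x_\alpha(n)=1\}$ where the paper uses the basic clopen sets $B_s=\{\alpha:s\subseteq f(\alpha)\}$, a cosmetic difference), and show that an unsplit $A$ must concentrate on a single branch $y$ up to a countable union of sets of size $<\kappa$, contradicting $\cf(\kappa)>\oo$. The appeal to regularity in both places is exactly as in the paper's argument.
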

In turn, $\mf r_\sigma(\kappa)$ does not exist if  $\aleph_0<\kappa\leq 2^{\aleph_0}$. 

\begin{proof} Take an injection $f:\kappa\to 2^\oo$ and let $B_s=\{\alpha<\kappa:s\subs f(\alpha)\}$ for $s\in 2^{<\oo}$. We claim that $\{B_s:s\in 2^{<\oo}\}$ splits all $A\in [\kappa]^\kappa$. Indeed, this follows from the fact that any uncountable set of reals has at least two complete accumulation points. In detail, assume that some $A\subs \kappa$ is not split by any $B_s$. Then the set $S=\{s\in 2^{<\oo}:|A\cap B_s|=\kappa\}$ cannot contain incompatible elements (as $B_s\cap B_t=\emptyset$ whenever $s,t$ are incompatible), and so there is at most one $\alpha$ such that $s\in S$ implies $s\subs f(\alpha)$. In turn, $$A\subs \{\alpha\}\cup \bigcup_{s\in 2^{<\oo}\setm S} A\cap B_s$$ and the latter set has size $<\kappa$.

 \end{proof}

\begin{prop}
 If $\cf(\kappa)>2^{\aleph_0}$ then $\mf r_\sigma(\kappa)$ exists, and $\mf r(\kappa)\leq \mf r_\sigma(\kappa) \leq \cf([\mf r(\kappa)]^\oo)$.
\end{prop}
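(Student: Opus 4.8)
The plan is to prove the two inequalities separately. For the lower bound $\mf r(\kappa)\leq \mf r_\sigma(\kappa)$: any family $F$ witnessing $\mf r_\sigma(\kappa)$ is in particular unreaped by every \emph{single} set (if one $B$ split all of $F$, then the one-element family $\{B\}$ would countably split $F$), so $\mf r(\kappa)\leq |F|$, and taking the minimum gives the claim. This also shows that if $\mf r_\sigma(\kappa)$ exists then it is an actual cardinal $\geq\mf r(\kappa)\geq\kappa$. The real content is the upper bound, which simultaneously establishes existence.

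For $\mf r_\sigma(\kappa)\leq \cf([\mf r(\kappa)]^\oo)$: fix a reaping family $F_0\subs[\kappa]^\kappa$ of size $\mf r(\kappa)$, and fix a cofinal family $\mc C\subs [\mf r(\kappa)]^\oo$ (i.e. cofinal in the $\subseteq$-order on countable subsets) of size $\cf([\mf r(\kappa)]^\oo)$. Index $F_0=\{A_\alpha:\alpha<\mf r(\kappa)\}$. For each countable $c\in\mc C$, form the countable subfamily $F_c=\{A_\alpha:\alpha\in c\}$, and I claim $F=\{F_c:c\in\mc C\}$ — or rather, $F$ should be the family $F_0$ itself, with the point being that $F_0$ witnesses $\mf r_\sigma(\kappa)$ via the covering family $\mc C$. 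Precisely: I will show that for \emph{any} countable $\mc B=\{B_n:n<\oo\}\subs[\kappa]^\kappa$, some $A\in F_0$ is split by no $B_n$. Suppose not: then every $A_\alpha$ is split by some $B_{n(\alpha)}$. By the pigeonhole principle (here we use $\cf(\mf r(\kappa))>\oo$, which follows from $\cf(\kappa)>2^{\aleph_0}\geq\aleph_1$ together with $\mf r(\kappa)\geq\kappa$ — wait, $\mf r(\kappa)$ could be singular of cofinality $\oo$; this is the gap I must handle), there is a fixed $n$ and a set $I$ of indices with $\{A_\alpha:\alpha\in I\}$ of full size $\mf r(\kappa)$, all split by $B_n$. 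But then — this does not immediately contradict anything, so the argument must be different.

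Let me restructure the upper bound along the lines that actually work. The correct idea: given a counterexample family $\mc B=\{B_n:n<\oo\}$ to "$F_0$ is $\sigma$-unreaped", note that $\{B_n:n<\oo\}\cup\{\kappa\setm B_n:n<\oo\}$ generates a countable Boolean subalgebra, whose atoms partition $\kappa$ into countably many pieces; some piece $P$ has size $\kappa$, and $P$ (or its defining finite Boolean combination) is then a \emph{single} set not split by... no. Instead: for each finite $s\in 2^{<\oo}$ let $B_s=\bigcap_{n\in\dom s, s(n)=1}B_n \cap \bigcap_{n\in\dom s,s(n)=0}(\kappa\setm B_n)$. If $A\in[\kappa]^\kappa$ is split by some $B_n$ for \emph{every} $n$, one shows the "branch" $\{s: |A\cap B_s|=\kappa\}$ is a chain with no maximal element, giving infinitely many disjoint pieces of $A$ of size $\kappa$ — harmless. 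So the honest approach is: enumerate $F_0$ and use $\mc C$ to cover; for each $c\in\mc C$ and each way of assigning to each $\alpha\in c$ a natural number $n(\alpha)$, record the set $\{\alpha\in c: n(\alpha)=n\}$; since this is finite-to-one data on a countable set there are only countably many... The cleanest route, and the one I'd commit to, is: show directly that if $\mc B$ witnesses that $F_0$ is $\sigma$-split, then from $\mc B$ one can extract a \emph{single} set splitting all of $F_0$, contradicting that $F_0$ reaps — using that a countable sequence $\langle B_n\rangle$ with "every $A$ split by some $B_n$" can be amalgamated into one splitter via a diagonalization along a partition of $\kappa$ into $\oo$ many stationary/unbounded pieces, exploiting $\cf(\kappa)>\oo$. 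The main obstacle, and the place I expect to spend real effort, is exactly this amalgamation step: converting a countable reaping-witness into a single-set reaping-witness, which is where the hypothesis $\cf(\kappa)>2^{\aleph_0}$ (rather than merely $\cf(\kappa)>\oo$) must be doing work, presumably via the preceding Observation that below $2^{\aleph_0}$ a single countable family suffices — i.e. the bound $\cf([\mf r(\kappa)]^\oo)$ appears precisely because one must pass to cofinally many countable "coordinates" of $F_0$ and handle each by a local version of that Observation.
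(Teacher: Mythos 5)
There is a genuine gap: you never produce the family that witnesses $\mf r_\sigma(\kappa)\leq\cf([\mf r(\kappa)]^\oo)$, and the routes you sketch aim at the wrong target. Both of your main attempts --- showing that the reaping family $F_0$ itself is $\sigma$-unreaped, and ``amalgamating'' a countable splitting witness into a single splitter --- would, if they succeeded, prove the stronger inequality $\mf r_\sigma(\kappa)\leq\mf r(\kappa)$; neither uses the cofinal family in $[\mf r(\kappa)]^\oo$ in any essential way, which is already a sign that they cannot be the intended argument. The bound is by $\cf([\mf r(\kappa)]^\oo)$ precisely because the witness is a \emph{new} family, with one member for each countable subfamily of $F_0$ taken from a cofinal collection.

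The missing construction is this. For a countable family $\mc B=\{B_n:n<\oo\}$ consider the trace map $g_{\mc B}:\kappa\to\mc P(\oo)$, $g_{\mc B}(\alpha)=\{n<\oo:\alpha\in B_n\}$. Its range has size at most $2^{\aleph_0}<\cf(\kappa)$, so some fiber $A_{\mc B}$ has size $\kappa$; by construction $A_{\mc B}\subs B_n$ or $A_{\mc B}\cap B_n=\emptyset$ for every $n$, so no $B_n$ splits $A_{\mc B}$. (This fiber pigeonhole is the only place $\cf(\kappa)>2^{\aleph_0}$ is used; the preceding Observation shows it must fail for $\aleph_0<\kappa\leq 2^{\aleph_0}$.) Now fix $\{\mc B_\xi:\xi<\lambda\}$ cofinal in $[F_0]^\oo$ with $\lambda=\cf([\mf r(\kappa)]^\oo)$ --- so the members of the reaping family themselves play the role of the potential splitters $B_n$ --- and let $G=\{A_{\mc B_\xi}:\xi<\lambda\}$. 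Given a countable $\mc B$, one finds $\xi$ with $\mc B\subs\mc B_\xi$ and concludes that $A_{\mc B_\xi}$ is split by no member of $\mc B$. This is the paper's argument. Your verification of $\mf r(\kappa)\leq\mf r_\sigma(\kappa)$ is correct and matches the paper, but the upper bound, which you yourself flag as unfinished, is exactly the part that is missing.
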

\begin{proof}
 Given a countable family $\mc B=\{B_n:n<\oo\}$, we can look at the map $g_{\mc B}:\kappa\to [\oo]^\oo$ defined by $g_{\mc B}(\alpha)=\{n\in \oo:\alpha\in B_n\}$. For any $A\in [\kappa]^\kappa$, it is equivalent that 
 \begin{enumerate}
  \item  no element of $\mc B$ splits $A$, and 
  \item $g_{\mc B}$ is eventually constant on $A$. 
 \end{enumerate}
  Suppose that $F\subs [\kappa]^\kappa$ is a reaping family of size $\mf r(\kappa)$, and $\{\mc B_\xi:\xi<\lambda\}$ is cofinal in $[\mf r(\kappa)]^\oo$ of size $\lambda=\cf([\mf r(\kappa)]^\oo)$. Find $A_\xi\in [\kappa]^\kappa$ so that $g_{\mc B_\xi}\uhr A_\xi$ is constant, which  can be done by $\cf(\kappa)>2^{\aleph_0}$. 
 
 We claim that $\{A_\xi:\xi<\lambda\}$ cannot be split by any countable family $\mc B$, and so $\mf r_\sigma(\kappa) \leq \cf([\mf r(\kappa)]^\oo)$. Indeed, given $\mc B$, find $\xi<\lambda$ so that $\mc B\subs \mc B_\xi$, and so $A_\xi$ is not split by any member of $\mc B$ (since $g_{\mc B}$ is constant on $A_\xi$).

 Note that once $\mf r_\sigma(\kappa)$ exists, $\mf r(\kappa)\leq \mf r_\sigma(\kappa)$ trivially holds by definition.
 \end{proof}

For $\kappa=\oo$, we know that  $\mf r_\sigma\leq \max\{\cf([\mf r]^\oo),\non(\mc M)\}$  (see \cite[Theorem 3.6]{brendlesplitting}), and it is a long standing open problem whether $\mf r(\oo)<\mf r_\sigma(\oo)$ is possible \cite{brendlesplitting}.

\begin{prop}\label{prop:rsigma}
$\mf d(\kappa)\leq \mf r_\sigma(\kappa)$ for any uncountable, regular $\kappa> 2^{\aleph_0}$.
\end{prop}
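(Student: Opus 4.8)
The plan is to mimic the Raghavan--Shelah argument that gave $\mf d_{cl}(\kappa)\le \mf r(\kappa)$ for unsplit RS-families, but to run it with a \emph{countable} collection of clubs playing the role of a single splitting set, so that we can exploit a reaping family that is unsplit against countable families (which is exactly what $\mf r_\sigma(\kappa)$ provides). First I would fix a family $F\subs[\kappa]^\kappa$ of size $\mf r_\sigma(\kappa)$ witnessing that no countable $\{B_n:n<\oo\}$ splits all of $F$. The dichotomy to set up is the same as before: either there is a club $E_1$ such that for every club $E_2\subs E_1$ some $A\in F$ has $A\subs^*\bigcup_{\xi\in E_2}[\xi,s_{E_1}(\xi))$ --- in which case the functions $g_A(\xi)=s_A(s_{E_1}(\xi))$ already $\le^*$-dominate and $\mf d(\kappa)\le|F|=\mf r_\sigma(\kappa)$, giving us what we want --- or $F$ has property RS. So I would assume $F$ has property RS and try to conclude directly from $F$ being unsplit-against-countable-families that $\{s_A:A\in F\}$ (or a small enlargement of it) is $\le_{cl}$-dominating, hence $\mf d(\kappa)=\mf d_{cl}(\kappa)\le \mf r_\sigma(\kappa)$ via Cummings--Shelah.

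The key move is to iterate property RS countably many times to produce the splitting family. Given $f\in\kappa^\kappa$, start with an $f$-closed club $E_1=E^0$. Applying property RS to $E^0$ yields a club $E^1\subs E^0$ with $A\not\subs^*\bigcup_{\xi\in E^1}[\xi,s_{E^0}(\xi))$ for all $A\in F$; apply RS again to $E^1$ to get $E^2\subs E^1$, and so on, building a $\subseteq$-decreasing sequence of clubs $E^0\supseteq E^1\supseteq E^2\supseteq\cdots$, each obtained from the previous by property RS. Because $\cf(\kappa)>2^{\aleph_0}\ge\aleph_1$, the intersection $E^\oo=\bigcap_{n<\oo}E^n$ is still a club. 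Now consider the countable family of sets $B_n=\bigcup_{\xi\in E^{n+1}}[\xi,s_{E^n}(\xi))$. Property RS at stage $n$ tells us $A\cap B_n$ has size $<\kappa$ for \emph{some} ... no: it tells us $A\not\subs^* B_n$, i.e. $A\cap(\kappa\setm B_n)$ has size $\kappa$. Since $\{B_n:n<\oo\}$ does not split all of $F$, there is $A\in F$ and $n$ with $A\cap B_n$ of size $<\kappa$, i.e. $A\subs^* \kappa\setm B_n = \kappa\setm\bigcup_{\xi\in E^{n+1}}[\xi,s_{E^n}(\xi))$. Then, exactly as in the Raghavan--Shelah computation, for $\xi\in E^{n+1}$ large enough we get $f(\xi)<s_{E^n}(\xi)\le s_A(\xi)$, because $s_A(\xi)\in A$ while $A\cap[\xi,s_{E^n}(\xi))=\emptyset$, and $f(\xi)<s_{E^n}(\xi)$ holds because $E^n\subseteq E^0$ is $f$-closed. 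Hence $f\le s_A$ on a tail of the club $E^{n+1}$, so $f\le_{cl}s_A$.

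One subtlety to handle carefully: in the second horn of the dichotomy I am using property RS as defined — "for any club $E_1$ there is a club $E_2\subs E_1$ with $A\not\subs^*\bigcup_{\xi\in E_2}[\xi,s_{E_1}(\xi))$ for all $A\in F$" — and I should check that iterating it does not require any uniformity or choice beyond picking one witnessing club at each of countably many stages, which is fine. I also need $E^n$ to be $f$-closed for the inequality $f(\xi)<s_{E^n}(\xi)$; since $E^n\subseteq E^0$ and $E^0$ is $f$-closed, $E^n$ inherits this (for $\xi\in E^n$, $s_{E^n}(\xi)\ge s_{E^0}(\xi)>f(\xi)$). The main obstacle I anticipate is verifying that the first horn of the dichotomy genuinely gives $\le^*$-domination from the single club $E_1$ (this is the original RS observation and should transfer verbatim), and making sure that the $B_n$'s are defined from the iterated clubs in the right direction so that "$F$ not split by $\{B_n\}$" produces an $A$ with $A\subs^*\kappa\setm B_n$ rather than the useless $A\subs^* B_n$; the asymmetry is resolved precisely because property RS already rules out $A\subs^* B_n$, so the only way $B_n$ fails to split $A$ is $A\subs^*\kappa\setm B_n$. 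Everything else is the routine club-closure bookkeeping from the excerpt's RS discussion.
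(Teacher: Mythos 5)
Your overall strategy matches the paper's up to the very last step, but there is a genuine gap there. From ``$\{B_n:n<\oo\}$ does not split all of $F$'' you extract only a single $A\in F$ and a \emph{single} $n$ with $A\subs^*\kappa\setm B_n$; this yields $f(\xi)<s_{E^n}(\xi)\leq s_A(\xi)$ only for $\xi$ in a tail of the club $E^{n+1}$, i.e.\ $f\leq_{cl}s_A$, and hence only $\mf d_{cl}(\kappa)\leq\mf r_\sigma(\kappa)$. Your closing appeal to $\mf d(\kappa)=\mf d_{cl}(\kappa)$ via Cummings--Shelah is licensed only for $\kappa\geq\beth_\oo$; whether that equality holds for all uncountable regular $\kappa$ is precisely one of the open problems recorded in Section 4, and the hypothesis $\kappa>2^{\aleph_0}$ does not put you above $\beth_\oo$. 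So, as written, the argument does not prove the proposition in the stated generality (only the known inequality $\mf d_{cl}(\kappa)\leq\mf d(\kappa)\leq\cf([\mf d_{cl}(\kappa)]^\oo)$ would be available, which gives the wrong bound).

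The repair is to use the full strength of $\sigma$-unsplittability, which is what the paper's proof does. Since no countable family splits every member of $F$, there is one $A\in F$ that is split by \emph{no} $B_n$; combined with the RS property, which rules out $A\subs^*B_n$, this gives $A\setm\delta\subs\kappa\setm B_n$ simultaneously for all $n<\oo$, with a single $\delta<\kappa$ (take the supremum of the countably many bounds, using $\cf(\kappa)>\oo$). Then for \emph{every} $\alpha\in\kappa\setm\delta$ --- not merely for $\alpha$ ranging over a club --- the non-increasing sequence $\sup(E_n\cap(\alpha+1))$ stabilizes, so some $n$ satisfies $\sup(E_n\cap(\alpha+1))=\sup(E_{n+1}\cap(\alpha+1))=:\xi$, and the computation $\xi\leq\alpha\leq f(\alpha)<s_{E_n}(\xi)\leq s_A(\alpha)$ goes through (normalizing $\alpha\leq f(\alpha)$ and taking $E_0$ $f$-closed, so that $s_{E_n}(\xi)\in E_0$ is $f$-closed). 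This gives $f\leq^*s_A$ outright, so $\{s_A:A\in F\}$ is $\leq^*$-dominating and no passage through $\mf d_{cl}(\kappa)$ is needed. Everything else in your proposal --- the dichotomy with property RS and the countable iteration producing the $B_n$ --- is exactly as in the paper.
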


\begin{proof} Take a family $F$ which is not split by countably many sets and has size $\mf r_\sigma(\kappa)$. Again, we can suppose that $F$ is RS (otherwise $\mf d(\kappa)\leq |F|$ holds); we will show that $\{s_A:A\in F\}$ is dominating. Pick any $f\in \kappa^\kappa$, and we may assume $\alpha\leq f(\alpha)$ for all $\alpha<\kappa$. Starting form an $f$-closed club $E_0$, build $E_0\supseteq E_1\supseteq \dots$ clubs in $\kappa$ so that $$A\not \subs^* B_n=\bigcup_{\xi\in E_{n+1}}[\xi,s_{E_{n}}(\xi))$$  for all $A\in F$ and $n\in \oo$ (this is simply by applying that $F$ is RS inductively). The family $\{B_n\}_{n\in \oo}$ cannot split $F$ so there is a single $A\in F$ unsplit by all the $B_n$. This means that $$A\setm \delta \subs \kappa\setm B_n$$  some $\delta<\kappa$ and for all $n<\oo$. We claim that $f\leq^* s_A$.

Indeed, for any $\alpha\in \kappa\setm \delta$, we can find $n<\oo$ so that $\sup (E_n\cap (\alpha+1))=\sup (E_{n+1}\cap(\alpha+1))$ and let $\xi$ denote this common value. Now $$\xi\leq \alpha\leq f(\alpha)<s_{E_n}(\xi)<s_A(\alpha)$$ as desired.
 
\end{proof}

Next, we use this result to present a new characterization of $\mf d(\kappa)$ for uncountable $\kappa$. For $\kappa=\oo$, the value of $\min\{\mf r,\mf d\}$ is actually equal to the minimal size of a family of partitions $\mc I$ of $\oo$ into finite sets (equivalently, partitions to intervals) so that there is no single $A\in [\oo]^\oo $ that splits all $(I_n)_{n\in \oo}\in \mc I$ in the sense that both $\{n\in \oo:I_n\subs A\}$ and $\{n\in \oo:I_n\cap A=\emptyset\}$ are infinite. This invariant, the \emph{finitely reaping number}, is denoted by $\mf{fr}$ \cite{brendlesplitting}.

Now, the equivalent of this invariant for an uncountable and regular $\kappa$, which we denote by $\mf{fr}(\kappa)$, is the following: the minimal size of a family of clubs $\mc E$ so that  there is no single $A\subs \kappa$ such that both $\{\xi\in E:[\xi,s_E(\xi))\subs A\}$ and $\{\xi\in E:[\xi,s_E(\xi))\cap A=\emptyset\}$ have size $\kappa$ for all $E\in \mc E$. We say that $A$ \emph{interval-splits} $E$. It is easily shown, just like the above cited \cite[Proposition 3.1]{brendlesplitting}, that  $\mf{fr}(\kappa)=\min\{\mf d(\kappa),\mf r(\kappa)\}$ still holds.

Similarly, one proves that  $\mf{fr}_\sigma(\kappa)=\min\{\mf d(\kappa),\mf r_\sigma(\kappa)\}$, and so we actually get  $\mf{fr}_\sigma(\kappa)=\mf d(\kappa)$ by Proposition \ref{prop:rsigma} for an uncountable, regular $\kappa$. In other words:

\begin{cor}\label{cor:char} Suppose that $\kappa$ is regular and uncountable. Then $\mf d(\kappa)$ is the minimal size of a family of clubs $\mc E$ so that  there is no countable family $\mc A$ of subsets of $\kappa$ such that both $\{\xi\in E:[\xi,s_E(\xi))\subs A\}$ and $\{\xi\in E:[\xi,s_E(\xi))\cap A=\emptyset\}$ have size $\kappa$ for all $E\in \mc E$ and some $A\in \mc A$.
\end{cor}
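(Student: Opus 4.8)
The plan is to restate the corollary as the single identity $\mf{fr}_\sigma(\kappa)=\mf d(\kappa)$, where $\mf{fr}_\sigma(\kappa)$ denotes the cardinal described in the paragraph preceding the statement: the least size of a family $\mc E$ of clubs admitting no countable family $\mc A$ of subsets of $\kappa$ such that every $E\in\mc E$ is interval-split by some $A\in\mc A$ (call such an $\mc A$ a \emph{countable interval-$\sigma$-splitter} for $\mc E$; this matches the phrasing in the corollary). As indicated in the text, I would establish $\mf{fr}_\sigma(\kappa)=\min\{\mf d(\kappa),\mf r_\sigma(\kappa)\}$ by imitating the countable case \cite[Proposition 3.1]{brendlesplitting}, and then invoke Proposition \ref{prop:rsigma}: if $\kappa>2^{\aleph_0}$ then $\mf d(\kappa)\leq\mf r_\sigma(\kappa)$ by Proposition \ref{prop:rsigma}, while if $\aleph_0<\kappa\leq 2^{\aleph_0}$ then $\mf r_\sigma(\kappa)$ does not exist by \cite[Lemma 3]{zapletal}; in either case $\min\{\mf d(\kappa),\mf r_\sigma(\kappa)\}=\mf d(\kappa)$ (in the second case every family counts as ``$<\mf r_\sigma(\kappa)$'' and no $\sigma$-reaping family exists, so the $\mf r_\sigma$-alternative below is vacuous and the identity again reads $\mf{fr}_\sigma(\kappa)=\mf d(\kappa)$). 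It remains to prove the two inequalities $\mf{fr}_\sigma(\kappa)\leq\mf d(\kappa)$ and $\mf{fr}_\sigma(\kappa)\geq\min\{\mf d(\kappa),\mf r_\sigma(\kappa)\}$.

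For $\mf{fr}_\sigma(\kappa)\leq\mf d(\kappa)$ I would argue directly. Fix a $\leq^*$-dominating family $\mc F$ of size $\mf d(\kappa)$ and assume, as usual, that each $f\in\mc F$ is strictly increasing, continuous, and satisfies $f(\gamma)>\gamma$. For $f\in\mc F$ let $E_f=\{\gamma<\kappa:f[\gamma]\subseteq\gamma\}$ be the club of closure points of $f$, so that $s_{E_f}(\gamma)>f(\gamma)$ for every $\gamma\in E_f$. I claim $\{E_f:f\in\mc F\}$ has no countable interval-$\sigma$-splitter. Suppose $\{B_n:n<\oo\}$ were one. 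Since a bounded or co-bounded set interval-splits no $E_f$, we may assume each $B_n$ and $\kappa\setminus B_n$ is unbounded; then $g_n(\gamma):=\min\{\delta:[\gamma,\delta)\cap B_n\neq\emptyset\text{ and }[\gamma,\delta)\setminus B_n\neq\emptyset\}$ is well defined with $g_n(\gamma)>\gamma$, and $g(\gamma):=\sup_{n}g_n(\gamma)<\kappa$ since $\cf(\kappa)>\oo$. Choose $f\in\mc F$ with $g\leq^* f$. For all sufficiently large $\gamma\in E_f$ and all $n$ we then have $[\gamma,g_n(\gamma))\subseteq[\gamma,f(\gamma))\subseteq[\gamma,s_{E_f}(\gamma))$, so $[\gamma,s_{E_f}(\gamma))$ meets both $B_n$ and $\kappa\setminus B_n$; hence $\{\gamma\in E_f:[\gamma,s_{E_f}(\gamma))\subseteq B_n\}$ is bounded for every $n$, i.e.\ no $B_n$ interval-splits $E_f$ --- contradicting that $\{B_n:n<\oo\}$ was an interval-$\sigma$-splitter. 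Thus $\mf{fr}_\sigma(\kappa)\leq|\mc F|=\mf d(\kappa)$.

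The inequality $\mf{fr}_\sigma(\kappa)\geq\min\{\mf d(\kappa),\mf r_\sigma(\kappa)\}$ is the part I expect to be the main obstacle, and here I would transcribe the proof of the corresponding statement for $\kappa=\oo$ from \cite[Proposition 3.1]{brendlesplitting}. Given clubs $\mc E$ with $|\mc E|<\mf d(\kappa)$ and $|\mc E|<\mf r_\sigma(\kappa)$, one must build a countable interval-$\sigma$-splitter for $\mc E$. The argument interleaves two moves: one uses $|\mc E|<\mf d(\kappa)$ to coarsen --- replacing $\mc E$ by (countably many) auxiliary clubs whose consecutive intervals each engulf a complete interval of $E$ for enough $E\in\mc E$ --- and then uses $|\mc E|<\mf r_\sigma(\kappa)$, applied to the family of subsets of $\kappa$ that record, relative to this coarsening, which of its blocks the $E$-intervals fall into, to reap. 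The passage between ``$A$ interval-splits the club $E$'' and ``$A$ splits a subset of $\kappa$'' is the evident one: enumerate $E$ in increasing order and record on which blocks $A$ is decisive. The only genuinely $\kappa$-specific ingredients are this bookkeeping and the recurring use of $\cf(\kappa)>\oo$ to keep suprema of countably many ordinals below $\kappa$; in the degenerate case $\kappa\leq 2^{\aleph_0}$ the reaping move is automatic since no $\sigma$-reaping family exists. Combining the two inequalities with Proposition \ref{prop:rsigma} as in the first paragraph yields $\mf{fr}_\sigma(\kappa)=\mf d(\kappa)$, which is exactly the assertion of the corollary.
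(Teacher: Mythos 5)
Your proposal is correct and follows essentially the same route as the paper: both directions reduce to $\mf{fr}_\sigma(\kappa)=\min\{\mf d(\kappa),\mf r_\sigma(\kappa)\}$ plus Proposition \ref{prop:rsigma}, with the easy inequality proved via closure-point clubs of a dominating family and the harder one via a coarsening club obtained from non-domination followed by an application of $\mf r_\sigma(\kappa)$ to the sets recording which blocks absorb an $E$-interval. Your second half is only sketched, but at the same level of detail as the paper's own sketch, and the key ideas match.
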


For the sake of completeness, we sketch the argument:

\begin{proof}
 First, we prove  $\mf{fr}_\sigma(\kappa)\leq \mf d(\kappa)$: given a dominating family $\mc F\subs \kappa^\kappa$, take some $f$-closed club $E_f$  for each $f\in \mc F$ and let $\mc E=\{E_f:f\in \mc F\}$. We claim that there is no countable family $\mc A$ such that each $E_f$ is split by some $A\in \mc A$. Indeed, let $g=\sup\{s_A:A\in \mc A\}$ and find $f\in \mc F$ so that $g\leq ^* f$. It is easy to see that almost all intervals of $E_f$ meet all $A\in \mc A$.

 Now, suppose that we are given a family of clubs $\mc E$ of size $<\mf d(\kappa)$. First, we can find a single $f\in \kappa^\kappa$ so that $\{\alpha<\kappa:s_E\circ s_E(\alpha)<f(\alpha)\}$ has size $\kappa$ for all $E\in \mc E$. So, if $D$ is an $f$-closed club in $\kappa$ then $X_E=\{\zeta\in D:[\xi,s_E(\xi))\subs [\zeta,s_D(\zeta))$ for some $\xi\in E\}$ has size $\kappa$. Since  $|\mc E|<\mf d(\kappa)\leq \mf r_\sigma(\kappa)$, there is a countable family  $\{B_n:n\in \oo\}$ so that any $X_E$ is split by some $B_n$. So, we define $$A_n=\bigcup \{[\zeta,s_D(\zeta)):\zeta\in D\cap B_n\}$$ for $n<\oo$. Now, any $E\in \mc E$ must be interval-split by some element of $\{A_n:n\in \oo\}$.
 
\end{proof}

Returning to the question whether $\mf d(\kappa)\leq \mf r(\kappa)$ for any uncountable $\kappa$, we present the following new results.

\begin{thm}\label{newbound}Suppose that $\kappa$ is uncountable and regular. Then
\begin{enumerate}
\itemsep0.5em 
 \item\label{one} $\mf d(\kappa)\leq \sup_{\lambda<\mf r(\kappa)}\cf([\lambda]^\oo)\leq \cf([\mf r(\kappa)]^\oo)\leq \mf r(\kappa)^\oo$, 
 \item\label{two} $\mf d(\kappa)\leq \cf([\mf r(\kappa)]^\theta)$ for any $\oo\leq \theta<\mf b(\kappa)$, and
 \item\label{three} if $\mf r(\kappa)<\mf b(\kappa)^{+\kappa}$ then $\mf d(\kappa)\leq \mf r(\kappa)$.
\end{enumerate}
 \end{thm}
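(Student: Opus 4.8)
The plan is to run the Raghavan--Shelah dichotomy on a reaping family together with the telescoping construction of clubs already used in the proof of Proposition~\ref{prop:rsigma}, and then to notice that the dominating functions one extracts from that construction need only be glued together along \emph{countable} subfamilies (for (1)), along $<\mf b(\kappa)$-sized subfamilies (for (2)), or, after a transfinite induction, along $<\mf r(\kappa)$-sized subfamilies (for (3)). So I would fix an unsplit $F\subseteq[\kappa]^\kappa$ with $|F|=\mf r(\kappa)$. If $F$ fails property RS then, as recalled in the background, $\mf d(\kappa)\le|F|=\mf r(\kappa)$, which already yields (3) and all the inequalities of (1)--(2) except possibly the first estimate of (1); so the substance is the case where $F$ \emph{is} RS, which I assume henceforth.

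The engine is a witness-counting strengthening of Proposition~\ref{prop:rsigma}. Given $f\in\kappa^\kappa$, assumed increasing, take an $f$-closed club $E_0$ and build, by repeated use of property RS, a decreasing sequence $E_0\supseteq E_1\supseteq\cdots$ of clubs so that $A\not\subseteq^* B_n:=\bigcup_{\xi\in E_{n+1}}[\xi,s_{E_n}(\xi))$ for all $A\in F$ and all $n<\oo$. Since $F$ is unsplit, for each $n$ the single set $B_n$ fails to split some $A^{(n)}\in F$, and because $A^{(n)}\not\subseteq^* B_n$ this forces $|A^{(n)}\cap B_n|<\kappa$, say $A^{(n)}\cap B_n\subseteq\delta_n$ with $\delta_n\in E_n$; put $\delta=\sup_n\delta_n<\kappa$. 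The telescoping estimate of Proposition~\ref{prop:rsigma} — for $\alpha\ge\delta$ one takes the level $n_\alpha$ where $\sup(E_n\cap(\alpha+1))$ stabilises and notes that the relevant interval sits inside $B_{n_\alpha}$ and entirely above $\delta_{n_\alpha}$ — then gives $f(\alpha)<s_{A^{(n_\alpha)}}(\alpha)$. Hence $g_F:=\sup_{n<\oo}s_{A^{(n)}}$, a legitimate element of $\kappa^\kappa$ since $\kappa$ is uncountable and regular, eventually dominates $f$, and it depends only on the countable subfamily $\{A^{(n)}:n<\oo\}$ of $F$. The only novelty over Proposition~\ref{prop:rsigma} is that the witness $A^{(n)}$ is allowed to vary with $n$ rather than being a single member of $F$ avoiding all the $B_n$.

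For part (1) I would fix a cofinal $\mc C\subseteq[F]^\oo$ with $|\mc C|=\cf([\mf r(\kappa)]^\oo)$ and set $g_c=\sup\{s_A:A\in c\}$; the previous paragraph makes $\{g_c:c\in\mc C\}$ a $\le^*$-dominating family, so $\mf d(\kappa)\le\cf([\mf r(\kappa)]^\oo)$, while the remaining inequalities in (1) are the monotonicity of $\lambda\mapsto\cf([\lambda]^\oo)$ and $\cf([\mu]^\oo)\le|[\mu]^\oo|=\mu^\oo$; and when $\mf r(\kappa)$ is regular the index set of each $\{A^{(n)}\}$ is bounded below $\mf r(\kappa)$, so one may work with a family cofinal in $\bigcup_{\lambda<\mf r(\kappa)}[\lambda]^\oo$, of size $\sup_{\lambda<\mf r(\kappa)}\cf([\lambda]^\oo)$. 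For part (2), given $\oo\le\theta<\mf b(\kappa)$, I would run the same scheme with a telescope of length $\theta$ instead of $\oo$ — handling limits of cofinality $\le\kappa$ by diagonal intersection and the longer limits using the boundedness granted by $\theta<\mf b(\kappa)$ — collecting $\theta$-many interval sets and witnesses $A^{(\eta)}$; the point of $\theta<\mf b(\kappa)$ is that the $\le\theta$-sized family $\{s_{A^{(\eta)}}:\eta<\theta\}$ is $\le^*$-bounded, so a single bound dominates $f$, and taking such bounds along a cofinal subfamily of $[\mf r(\kappa)]^\theta$ yields $\mf d(\kappa)\le\cf([\mf r(\kappa)]^\theta)$.

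Part (3) is the delicate one. If $\mf r(\kappa)\le\mf b(\kappa)$ it follows from (2); otherwise $\mf r(\kappa)=\mf b(\kappa)^{+\gamma}$ for some $\gamma<\kappa$ — this is exactly what the hypothesis $\mf r(\kappa)<\mf b(\kappa)^{+\kappa}$ provides — and I would induct on $\gamma$, using a pressing-down/pigeonhole argument on the witnesses $A^{(n)}$ produced for a given $f$ to confine the witnesses actually used to a subfamily of size $\le\mf b(\kappa)^{+\gamma_0}$ at a successor step $\gamma=\gamma_0+1$ (here regularity of $\mf b(\kappa)^{+(\gamma_0+1)}$ is used), to which the inductive hypothesis or (2) then applies, and splitting according to $\cf(\mf b(\kappa)^{+\gamma})$ at limit stages. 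The result is a $\le^*$-dominating family of size $\mf r(\kappa)$. The main obstacle throughout is precisely this witness-counting: the Raghavan--Shelah argument only delivers $\le_{cl}$-domination and uses a \emph{single} member of $F$, whereas genuine $\le^*$-domination forces us to assemble a witness for every $\alpha<\kappa$, and the non-trivial claim is that a countable — respectively $\le\theta$- or $<\mf r(\kappa)$-sized — bundle of witnesses always suffices. Keeping the bundle small is automatic in part (1) since countable sets below a regular cardinal are bounded, but in part (3) one must trap the witnesses inside a subfamily small enough for the inductive hypothesis even though proper subfamilies of $F$ need not be unsplit; this trapping, and the bound on its length forced by the possible lengths of decreasing club-sequences, is what both dictates the shape of the bounds and explains the appearance of $\mf b(\kappa)^{+\kappa}$.
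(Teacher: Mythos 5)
Your engine --- running the $\oo$-length telescope of Proposition \ref{prop:rsigma} but replacing the single unsplit witness by one witness $A^{(n)}$ per level --- is correct, and it does give the middle inequality of (\ref{one}), $\mf d(\kappa)\le\cf([\mf r(\kappa)]^\oo)$. It would also give (\ref{two}) directly: cover the countable witness set by a member of a cofinal $\mc B\subs [F]^\theta$ and use $\theta<\mf b(\kappa)$ to bound that member by a single function. So the $\theta$-length telescope you propose for (\ref{two}), which would require decreasing sequences of clubs of length possibly greater than $\kappa$, is both unnecessary and problematic. The genuine gap is in part (\ref{three}), and in the first inequality of (\ref{one}) when $\cf(\mf r(\kappa))=\oo$. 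Your engine attaches to each $f$ a \emph{countable} witness set $\{A^{(n)}:n<\oo\}\subs F$, and a countable subset of a family of size $\mu$ need not concentrate in any proper initial segment $F\uhr \lambda$ when $\cf(\mu)=\oo$; such cofinalities unavoidably occur at the limit stages $\mu=\mf b(\kappa)^{+\gamma}$ with $\cf(\gamma)=\oo$ of your induction. Since proper initial segments of $F$ need not be unsplit or RS (as you yourself note), you cannot re-run the engine inside $F\uhr\lambda$, and your ``pressing-down/pigeonhole on the witnesses'' is not an actual mechanism: for a single $f$ the witnesses are trivially confined to a countable set, and the real problem is that, as $f$ varies, these countable sets range cofinally over $[F]^\oo$, of which you can only afford $\mu$ many representatives rather than $\cf([\mu]^\oo)$ many.

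The paper's resolution is a strengthening of your engine that you are missing (Lemma \ref{lm:tech}): iterate the RS/unsplit dichotomy $\kappa$ times rather than $\oo$ times, producing clubs $E^i_2\subs E^i_1\subs \bigcap_{j<i}E^j_2$ and witnesses $A^i\in F$ for all $i<\kappa$, and then observe that the telescoping estimate already goes through if a single $g$ dominates $s_{A^i}$ for just \emph{infinitely many} $i$ (the stabilization argument only ever uses one level per $\alpha$). The correct inductive statement for (\ref{three}) is then: there is $\mc G_\lambda$ of size $|F\uhr\lambda|$ such that every $F_0\in[F\uhr\lambda]^\kappa$ has infinitely many members $A$ with $s_A\le^* g$ for a single $g\in\mc G_\lambda$. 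This statement makes no reference to unsplitness of $F\uhr\lambda$, is witnessed at the base case $|F\uhr\lambda|=\mf b(\kappa)$ by single $\le^*$-bounds of the initial segments, and propagates through all limit stages because a $\kappa$-sized (rather than countable) index set always meets some proper initial segment in an infinite --- indeed $\kappa$-sized, when the cofinality is below $\kappa$ --- set. The same device yields the first inequality of (\ref{one}) without any case split on $\cf(\mf r(\kappa))$.
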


In fact, the proof of Theorem \ref{newbound} will follow from a closer analysis of Raghavan and Shelah's arguments in \cite{dilip2}, given in the next lemma.


\begin{lemma}\label{lm:tech} Suppose that $\kappa$ is uncountable and regular, and $F\subs [\kappa]^\kappa$ is an RS family. Also, assume that $\mc G\subs \kappa^\kappa$ and for any $F_0\in [F]^\kappa$ there is some $g\in \mc G$ such that $$|\{A\in F_0:s_A\leq^*g\}|\geq \aleph_0.$$ Then $\mc G$ is $\leq^*$-dominating in $\kappa^\kappa$ as well. 
\end{lemma}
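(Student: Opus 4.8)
The plan is to follow the structure of the Raghavan–Shelah argument reproduced above (for the unsplit RS case), but to be more economical about which functions $s_A$ we actually need. Fix $f\in\kappa^\kappa$; we want to show some $g\in\mc G$ satisfies $f\leq^* g$. Without loss of generality assume $\alpha\leq f(\alpha)$ for all $\alpha$. Starting from an $f$-closed club $E_0$, we build a decreasing sequence of clubs $E_0\supseteq E_1\supseteq E_2\supseteq\dots$ so that, writing $B_n=\bigcup_{\xi\in E_{n+1}}[\xi,s_{E_n}(\xi))$, we have $A\not\subseteq^* B_n$ for every $A\in F$ and every $n<\oo$; this is exactly the inductive application of property RS, just as in Proposition \ref{prop:rsigma}.

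The key step is to extract, for each $n$, a \emph{large} (size $\kappa$) subfamily of $F$ that avoids $B_n$ on a tail. Here is where I would be careful: since $F\setminus\{A: A\subseteq^* B_n\}$ may not obviously be of size $\kappa$ from "$A\not\subseteq^* B_n$" alone — but in fact that condition holds for \emph{all} $A\in F$, so actually every $A\in F$ has $A\cap(\kappa\setminus B_n)$ cofinal, i.e. of size $\kappa$. What we really need is a single $F_0\in[F]^\kappa$ and a single $\delta<\kappa$ so that $A\setminus\delta\subseteq\kappa\setminus B_n$ for all $A\in F_0$, \emph{simultaneously for all} $n$. So I would instead do the construction so that at stage $n$ we also shrink $F$: pick $F_0\supseteq F_1\supseteq\dots$, each in $[F]^\kappa$, and ordinals $\delta_n<\kappa$, with $A\setminus\delta_n\subseteq\kappa\setminus B_n$ for all $A\in F_n$. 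Concretely: given $F_n$ and $E_n$, apply RS to get $E_{n+1}\subseteq E_n$ with $A\not\subseteq^* B_n$ for all $A\in F$; since each $A\in F_n$ satisfies $|A\setminus B_n|=\kappa$ — wait, this gives $A\cap(\kappa\setminus B_n)$ large but not that a tail of $A$ avoids $B_n$. The honest route is: the Observation/argument in the excerpt for unsplit RS families used unsplitness to get $A\subseteq^* \kappa\setminus B_n$; here $F$ need not be unsplit. So instead I would argue purely via the hypothesis on $\mc G$: having built the $E_n$'s, consider $F_0=F\in[F]^\kappa$ and apply the hypothesis to get $g\in\mc G$ with $Z=\{A\in F: s_A\leq^* g\}$ infinite; I want one $A$ in $Z$ that avoids all the $B_n$ on a tail. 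But $A\not\subseteq^* B_n$ only says $A$ meets $\kappa\setminus B_n$ infinitely, so I must build the $E_n$'s more cleverly — diagonalizing so that each $A\in F$ is eventually contained in $\kappa\setminus B_n$ for the \emph{appropriate} $n$ depending on $A$, using that a point $\alpha$ lies in $B_n$ only when $\sup(E_n\cap(\alpha{+}1))\neq\sup(E_{n+1}\cap(\alpha{+}1))$.

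Here is the clean version I would write down. Build $E_0\supseteq E_1\supseteq\dots$ as above. Let $E_\oo=\bigcap_n E_n$, a club. For $A\in F$: for every $\alpha\in\kappa$ with $\alpha\geq\min E_\oo$ there is some $n=n(A,\alpha)$ with $\sup(E_n\cap(\alpha{+}1))=\sup(E_{n+1}\cap(\alpha{+}1))$, hence $\alpha\notin B_n$; and then, exactly as in the last display of Proposition \ref{prop:rsigma}, $\xi:=\sup(E_n\cap(\alpha{+}1))\leq\alpha\leq f(\alpha)<s_{E_n}(\xi)<s_A(\alpha)$ — but this needs $\alpha\in A$ to conclude $s_A(\alpha)>f(\alpha)$ and needs the avoidance at the right $n$. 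The point is: for a fixed $A$, since $A\not\subseteq^* B_n$ for each $n$, we cannot pin down one $n$; instead fix $A\in F$ and note that for club-many $\xi\in E_\oo$, all $\alpha\in[\xi,s_{E_\oo}(\xi))\cap A$ satisfy $f(\alpha)<s_A(\alpha)$ provided $\alpha\notin B_n$ for some $n$ with $s_{E_n}(\xi)>f(\alpha)$ — and for $\xi\in E_\oo$ this holds because $s_{E_\oo}(\xi)=\lim_n s_{E_n}(\xi)$... this requires $s_{E_n}(\xi)\to s_{E_\oo}(\xi)$, which is false in general. I expect \textbf{this is the main obstacle}: correctly bookkeeping the finitely-many-$B_n$ avoidance so that a single $A$ with $s_A\leq^* g$ yields $f\leq^* s_A\leq^* g$. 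The resolution should mirror Proposition \ref{prop:rsigma} exactly: there the single unsplit $A$ avoided \emph{all} $B_n$ past $\delta$; here the hypothesis only hands us infinitely many candidate $A$'s, so I would instead iterate the construction so that at stage $n$ we pass to $F_n\in[F]^\kappa$ consisting of those $A$ with $A\setminus\delta_n\subseteq\kappa\setminus B_n$ — which requires knowing $\kappa$-many $A\in F$ avoid $B_n$ on a tail; and THAT is where I'd use that $F$ is RS more carefully, or simply add "for all but $<\kappa$ many $A$" into the definition's use. Granting such an $F_\oo=\bigcap_n F_n\in[F]^\kappa$ with a uniform $\delta$, apply the hypothesis to $F_\oo$: get $g\in\mc G$ and some $A\in F_\oo$ with $s_A\leq^* g$; then $A\setminus\delta\subseteq\kappa\setminus B_n$ for all $n$, so by the Proposition \ref{prop:rsigma} computation $f\leq^* s_A\leq^* g$, proving $\mc G$ is dominating.

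In summary, the skeleton is: (i) decreasing clubs via RS; (ii) the genuinely delicate combinatorial step of securing a size-$\kappa$ subfamily $F_\oo$ whose members \emph{all} eventually avoid every $B_n$; (iii) feed $F_\oo$ into the hypothesis to extract $g\in\mc G$ dominating a suitable $s_A$; (iv) the routine inequality chain $f\leq^* s_A\leq^* g$. I would allot essentially all the effort to step (ii) — getting the quantifiers over $F$ and over $n$ in the right order — and treat (i), (iii), (iv) as direct transcriptions of the arguments already in the excerpt.
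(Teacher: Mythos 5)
You have correctly located the obstacle---property RS only gives $A\not\subseteq^* B_n$, so no single $A\in F$ need eventually avoid all of the countably many $B_n$---but the resolution you propose does not work, and it is not the route the paper takes. Your step (ii) asks for $F_n\in[F]^\kappa$ such that \emph{every} $A\in F_n$ eventually avoids $B_n$. Nothing in the hypotheses provides this: unsplitness of $F$ (which the paper does use, via the Observation, and which is implicit in the lemma since in the application $F$ is a reaping family) yields only \emph{one} $A\in F$ with $A\subseteq^*\kappa\setm B_n$ for a given club pair, not $\kappa$ many. Moreover, even granting each $F_n\in[F]^\kappa$, a decreasing $\oo$-sequence of size-$\kappa$ subfamilies can have empty intersection, so $F_\oo=\bigcap_n F_n\in[F]^\kappa$ would still not follow. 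So the step you yourself flag as carrying all the weight is a genuine gap, and the quantifier order you are fighting ("one $A$ avoiding all $B_n$") cannot be achieved.

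The paper's proof sidesteps this by running the construction for $\kappa$ steps instead of $\oo$ steps, pairing each stage with its \emph{own} witness. One builds clubs $E^i_2\subs E^i_1\subs\bigcap_{j<i}E^j_2$ for $i<\kappa$ (with $E^0_1$ $f$-closed) and, using unsplitness plus RS at each stage, a single $A^i\in F$ with $A^i\setm\delta^i\subs\kappa\setm\bigcup_{\xi\in E^i_2}[\xi,s_{E^i_1}(\xi))$; each $A^i$ only has to avoid its own interval-union. The hypothesis on $\mc G$ is then applied to the size-$\kappa$ family $\{A^i:i<\kappa\}$ to extract $g\in\mc G$ and an infinite increasing sequence $i_0<i_1<\dots$ with $s_{A^{i_n}}\leq^* g$ for all $n$. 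For each sufficiently large $\alpha$ the non-increasing ordinals $\sup(E^{i_n}_1\cap(\alpha+1))$ stabilize at some $n$, with common value $\xi\in E^{i_n}_2$, and for that particular $n$ the avoidance property of $A^{i_n}$ gives $f(\alpha)<s_{E^{i_n}_1}(\xi)\leq s_{A^{i_n}}(\alpha)\leq g(\alpha)$. The point is that different $\alpha$'s are permitted to route through different $A^{i_n}$'s; the hypothesis on $\mc G$ is exactly what lets one fix a single $g$ dominating infinitely many of the $s_{A^i}$ simultaneously, which is the flexibility your $\oo$-step scheme cannot recover.
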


First, let us show how the theorem is deduced from this lemma.

\begin{proof}[Proof of Theorem \ref{newbound}] Assume that $F=\{A_\xi:\xi<\mu\}\subs [\kappa]^\kappa$ is an RS family of size $\mu=\mf r(\kappa)$, and let  $F\uhr \lambda=\{A_\xi:\xi<\lambda\}$ for $\lambda<\mu$. Observe that for any $F_0\in [F]^\kappa$ there is some $\lambda<\mu$ so that $F_0\cap F\uhr \lambda$ is infinite.

(\ref{one})  Suppose that $\mc B_\lambda\subs [F\uhr \lambda]^\oo$ is cofinal in $[F\uhr \lambda]^\oo$ for $\lambda<\mu$. If now $F_0\in [F]^\kappa$ then there is some $\lambda<\mu$ and $X\in \mc B_\lambda$ so that $F_0\cap X$ is infinite. Hence, if we let $g_X\in \kappa^\kappa$ dominate $\{s_A: A\in X\}$, then $\mc G=\{g_X:X\in \mc B_\lambda,\lambda<\mu\}$ satisfies the assumptions of Lemma \ref{lm:tech}. In turn, the first inequality of (1) holds (and the rest trivially follows).

(\ref{two}) As before, if $\mc B\subs [F]^\theta$ cofinal and $g_X\in \kappa^\kappa$ dominates $\{s_A: A\in X\}$ for $X\in \mc B$ then $\mc G=\{g_X:X\in \mc B\}$ satisfies the assumptions of Lemma \ref{lm:tech}.

(\ref{three}) We claim that for any $F\subs [\kappa]^\kappa$ of size $<\mf b(\kappa)^{+\kappa}$ there is some $\mc G$ of size $|F|$ that satisfies the assumptions of Lemma \ref{lm:tech}. We still use $\mu$ for the size of $F$ and keep the notation $F\uhr \lambda$.

Assume first that $\mu=\mf b(\kappa)$. We can find $\mc G=\{g_\lambda:\lambda<\mu\}$ so that $g_\lambda$ dominates $\{s_A:A\in F\uhr \lambda\}$. By the above observation, $\mc G$ satisfies the assumptions of Lemma \ref{lm:tech} and we are done.

In general, we proceed by induction. Suppose that the claim is proved for  $\mu=\mf b(\kappa)^{+\zeta}$ for $\zeta<\xi$ where $\xi<\kappa$. Since $\cf(\xi)<\kappa$, for any $F_0\in [F]^\kappa$ there is some $\lambda<\mu$ so that $|F_0\cap F\uhr \lambda|=\kappa$. So, if $\mc G_\lambda$ is the family provided by the inductive hypothesis that satisfies the assumptions of Lemma \ref{lm:tech} for $F\uhr \lambda$, then $\mc G=\cup\{\mc G_\lambda:\lambda<\mu\}$ works for $F$.

This, in turn, implies  $\mf d(\kappa)\leq \mf r(\kappa)$ whenever $\mf r(\kappa)<\mf b(\kappa)^{+\kappa}$ by Lemma \ref{lm:tech}.
\end{proof}

Now, we prove the lemma:

\begin{proof}[Proof of Lemma \ref{lm:tech}]
This statement is essentially proved in \cite{dilip}, but let us reiterate: fix an $f\in \kappa^\kappa$, and we construct clubs $E^i_2\subs E^i_1\subs \kappa$ and $A^i\in F$ for $i<\kappa$ so that $E^i_1\subs \bigcap_{j<i}E^j_2$ and $$A^i\setm \delta^i \subs \kappa \setm \bigcup_{\xi\in E^i_2}[\xi,s_{E^i_1}(\xi))$$ for some $\delta^i<\kappa$. Moreover, we assume that $E^0_0$ is $f$-closed.

Now, apply the assumption on $\mc G$ and the set $F_0=\{A^i:i<\kappa\}$ to find $g\in \mc G$ and increasing $i_0<i_1<i_2<\dots<\kappa$  so that $s_{A^{i_n}}\leq^* g$ for all $n<\oo$. We claim that $f\leq^* g$; indeed, fix some large enough $\alpha\in \kappa\setm \sup_{n<\oo} \delta^{i_n}$ that also satisfies $\sup_{n<\oo}s_{A^{i_n}}(\alpha)<g(\alpha)$. We will show that $f(\alpha)<g(\alpha)$. 

There is an $n<\oo$ so that $\sup (E^{i_n}_1\cap (\alpha+1))=\sup (E^{i_{n+1}}_1\cap(\alpha+1))$ and let $\xi$ denote this common value. Then $\xi\in E^{i_n}_2$ as well and $$\xi\leq \alpha\leq f(\alpha)<s_{E^{i_n}_1}(\xi)<s_{A^{i_n}}(\alpha)< g(\alpha)$$ as desired.
\end{proof}

%

\medskip

\subsection{On cofinalities} A particular case of Theorem \ref{newbound} addresses \cite[Question 15]{dilip2}:

\begin{cor}\label{cor:alephn}
$\mf d(\aleph_n)\leq \mf r(\aleph_n)$ unless  $\aleph_{\oo_n}\leq \mf r(\aleph_n)$ for $n<\oo$. 
\end{cor}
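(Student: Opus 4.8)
The plan is to read this off directly from part~(\ref{three}) of Theorem~\ref{newbound}, which asserts $\mf d(\kappa)\le \mf r(\kappa)$ whenever $\mf r(\kappa)<\mf b(\kappa)^{+\kappa}$. Fix $n$ with $1\le n<\oo$ (for $n=0$ the statement is outside the scope of Theorem~\ref{newbound}, which requires $\kappa$ uncountable) and assume the negation of the ``unless'' clause, i.e.\ $\mf r(\aleph_n)<\aleph_{\oo_n}$. By Theorem~\ref{newbound}(\ref{three}) it then suffices to verify $\mf r(\aleph_n)<\mf b(\aleph_n)^{+\aleph_n}$, and for this I would prove the cleaner fact that $\mf b(\aleph_n)^{+\aleph_n}\ge \aleph_{\oo_n}$ holds unconditionally; then $\mf r(\aleph_n)<\aleph_{\oo_n}\le \mf b(\aleph_n)^{+\aleph_n}$ and the theorem applies.

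The one computation I would carry out is the evaluation of $\mf b(\aleph_n)^{+\aleph_n}$. Write $\mf b(\aleph_n)=\aleph_\beta$ (every infinite cardinal has this form) and use the standard identity $\aleph_\gamma^{+\delta}=\aleph_{\gamma+\delta}$ for ordinals $\gamma,\delta$, where the superscript denotes iterated cardinal successor, $+$ in the subscript is ordinal addition, and $\aleph_n$ is identified with the ordinal $\oo_n$. Thus $\mf b(\aleph_n)^{+\aleph_n}=\aleph_{\beta+\oo_n}$. If $\beta\ge\oo_n$ this is $\ge\aleph_\beta\ge\aleph_{\oo_n}$. If $\beta<\oo_n$, then $\beta+\oo_n=\oo_n$: since $\oo_n$ is an infinite cardinal, $\beta+\eta<\oo_n$ for every $\eta<\oo_n$, hence $\beta+\oo_n=\sup_{\eta<\oo_n}(\beta+\eta)=\oo_n$, and so $\mf b(\aleph_n)^{+\aleph_n}=\aleph_{\oo_n}$. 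Either way $\mf b(\aleph_n)^{+\aleph_n}\ge\aleph_{\oo_n}$, as wanted. (One could also record $\mf b(\aleph_n)\ge\aleph_{n+1}$, but it is not needed.)

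I do not anticipate any genuine obstacle: the corollary is a bookkeeping consequence of Theorem~\ref{newbound}(\ref{three}), together with the observation that iterating the cardinal-successor operation $\aleph_n$ times, starting from \emph{any} cardinal, already lands at or above $\aleph_{\oo_n}$. The single point deserving care is keeping ordinal and cardinal arithmetic apart when evaluating $\mf b(\aleph_n)^{+\aleph_n}$ --- in particular the additive absorption $\beta+\oo_n=\oo_n$ for $\beta<\oo_n$ --- and remembering that the statement is intended for $n\ge 1$.
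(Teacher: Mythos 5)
Your proposal is correct and is exactly the paper's intended argument: the paper derives this corollary directly from Theorem~\ref{newbound}(\ref{three}) without spelling out the details, and your evaluation $\mf b(\aleph_n)^{+\aleph_n}=\aleph_{\beta+\oo_n}\geq\aleph_{\oo_n}$ (via ordinal absorption $\beta+\oo_n=\oo_n$ for $\beta<\oo_n$) is precisely the missing bookkeeping. Your remark that the statement should be read for $n\geq 1$ is also apt, since Theorem~\ref{newbound} concerns uncountable $\kappa$ and the $n=0$ case would in fact be consistently false.
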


Furthermore, one can use  Theorem \ref{newbound}  with PCF-theoretic bounds for cofinalities to relate $\mf d(\kappa)$ and $\mf r(\kappa)$: 

\begin{cor}\label{cor:pcf}
If $\mf r(\aleph_1)<\aleph_{\oo_2}$ then $\mf d(\aleph_1)<\aleph_{\oo_5}$.
\end{cor}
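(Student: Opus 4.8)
The plan is to reduce the statement to Shelah's pcf bounds by way of part~(\ref{one}) of Theorem~\ref{newbound}. First I would observe that Theorem~\ref{newbound}(\ref{one}), which applies since $\aleph_1$ is uncountable and regular, gives $\mf d(\aleph_1)\leq\cf([\mf r(\aleph_1)]^\oo)$; so the whole corollary reduces to the purely pcf-theoretic claim that $\cf([\mu]^\oo)<\aleph_{\oo_5}$ for every cardinal $\mu<\aleph_{\oo_2}$.

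To establish that claim I would write $\mu=\aleph_\gamma$ with $\gamma<\oo_2$ and unwind the usual recursion for $\cf([\aleph_\gamma]^\oo)$: at successor steps one has $\cf([\aleph_{\rho+1}]^\oo)=\max\{\aleph_{\rho+1},\cf([\aleph_\rho]^\oo)\}$; at limit stages of uncountable cofinality every countable subset is bounded, so nothing new appears; and at limit ordinals $\delta$ of cofinality $\oo$ one invokes Shelah's theorem that $\cf([\aleph_\delta]^\oo)=\mathrm{pp}(\aleph_\delta)$. This presents $\cf([\aleph_\gamma]^\oo)$ as the maximum of $\aleph_\gamma$ and the supremum of $\mathrm{pp}(\aleph_\delta)$ over limit ordinals $\delta\leq\gamma$ with $\cf(\delta)=\oo$. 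Next I would apply Shelah's bound $\mathrm{pp}(\aleph_\delta)<\aleph_{|\delta|^{+4}}$: since each such $\delta\leq\gamma<\oo_2$ has $|\delta|\leq\aleph_1$, this gives $\mathrm{pp}(\aleph_\delta)<\aleph_{\aleph_1^{+4}}=\aleph_{\aleph_5}=\aleph_{\oo_5}$ for every relevant $\delta$. There are at most $\aleph_1$ such ordinals, while $\cf(\aleph_{\oo_5})=\aleph_5>\aleph_1$, so the supremum stays below $\aleph_{\oo_5}$; as also $\aleph_\gamma<\aleph_{\oo_2}<\aleph_{\oo_5}$, we get $\cf([\aleph_\gamma]^\oo)<\aleph_{\oo_5}$, proving the claim. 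Feeding $\mu=\mf r(\aleph_1)$ back into the first step then yields $\mf d(\aleph_1)\leq\cf([\mf r(\aleph_1)]^\oo)<\aleph_{\oo_5}$. (Note that even if one only has $\cf([\aleph_\delta]^\oo)=\mathrm{pp}(\aleph_\delta)^+$, the argument survives, since $\aleph_{\oo_5}$ is a limit cardinal.)

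The only substantive ingredients, hence the main obstacle, are the two pcf facts being borrowed: the identity $\cf([\aleph_\delta]^\oo)=\mathrm{pp}(\aleph_\delta)$ for $\cf(\delta)=\oo$, and, above all, Shelah's deep bound $\mathrm{pp}(\aleph_\delta)<\aleph_{|\delta|^{+4}}$. The ``$+4$'' in the latter is precisely what turns the index $2$ of the hypothesis $\mf r(\aleph_1)<\aleph_{\oo_2}$ into the index $5$ of the conclusion; everything else, namely Theorem~\ref{newbound}(\ref{one}), the elementary recursion for $\cf([\cdot]^\oo)$, and the arithmetic $\aleph_1^{+4}=\aleph_5$, is routine.
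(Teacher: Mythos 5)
Your argument is correct, but it takes a noticeably longer route than the paper. The paper invokes part (\ref{two}) of Theorem \ref{newbound} with $\theta=\aleph_1$ (legitimate, since $\aleph_1<\mf b(\aleph_1)$ always holds), so the quantity to be bounded is $\cf([\mf r(\aleph_1)]^{\aleph_1})$; writing $\mf r(\aleph_1)=\aleph_{\delta+n}$ with $\delta<\omega_2$, this exponent matches exactly the cited bound $\cf([\aleph_\delta]^{|\delta|})<\aleph_{|\delta|^{+4}}$ with $|\delta|=\aleph_1$, and the proof finishes in one line: $\mf d(\aleph_1)\leq \aleph_{\delta+n}\cdot\cf([\aleph_\delta]^{\omega_1})<\aleph_{\omega_5}$. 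You instead start from part (\ref{one}), i.e.\ the exponent $\omega$, and since $\omega\neq|\delta|$ for the relevant $\delta$ you must rebuild a bound on $\cf([\aleph_\gamma]^{\aleph_0})$ by hand: the successor/limit recursion, Shelah's cov--pp identity at countable-cofinality stages, and the bound $\mathrm{pp}(\aleph_\delta)<\aleph_{|\delta|^{+4}}$. All of these ingredients are standard and your bookkeeping is sound (at most $\aleph_1$ stages against $\cf(\aleph_{\omega_5})=\aleph_5$, and the remark that the argument survives a $\mathrm{pp}$-versus-$\mathrm{pp}^+$ discrepancy because $\aleph_{\omega_5}$ is a limit cardinal), so the proof goes through. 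The cost of your route is the additional pcf machinery (cov=pp in particular), and it buys essentially nothing here, since part (\ref{two}) with $\theta=\aleph_1$ is always available for $\kappa=\aleph_1$. One small point in your favour: your uniform treatment of all $\gamma<\omega_2$ also covers the case $\mf r(\aleph_1)<\aleph_{\omega_1}$, which the paper's phrasing (requiring $\omega_1\leq\delta$) technically omits, although that case only improves the bound.
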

\begin{proof}
Recall that  $$\cf([\aleph_\delta]^{|\delta|})<\aleph_{|\delta|^{+4}}$$  for any $\delta<\aleph_\delta$  \cite[Theorem 7.2]{arithmetic}.  Suppose that $\mf r(\aleph_1)=\aleph_{\delta+n}$ for some limit $\omg\leq \delta<\oo_2$ and $n\in \oo$. In turn, by Theorem \ref{newbound} (\ref{two}) applied with $\theta=\aleph_1<\mf b(\aleph_1)$, $$\mf d(\aleph_1)\leq \cf([\aleph_{\delta+n}]^\omg)\leq \aleph_{\delta+n}\cdot \cf([\aleph_{\delta}]^\omg)<\aleph_{\oo_5}$$ as desired.
\end{proof}


\medskip

The above results point us to the following interesting question: what can the cofinality of $\mf r(\kappa)$ be? Indeed, it is famously open whether $\cf(\mf r)=\oo$ is consistent,\footnote{Here $\mf r,\mf d$  denote $\mf r(\aleph_0)$ and $\mf d(\aleph_0)$, respectively.}  however $\cf(\mf r)=\oo$ does imply $\mf d\leq \mf r$ \cite{shelah_cof}.


\begin{thm}\label{thm:cof}
  If $\cf(\mf r(\kappa))\leq \kappa$ then  $\mf d(\kappa)\leq \mf r(\kappa)$ for any uncountable, regular $\kappa$.
\end{thm}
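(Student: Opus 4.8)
The plan is to reduce to the previously established Lemma \ref{lm:tech} by handling the singular case $\cf(\mf r(\kappa))<\kappa$ and the regular case $\cf(\mf r(\kappa))=\kappa$ (necessarily $\mf r(\kappa)=\kappa$, so $\mf d(\kappa)=\kappa$ trivially since $\kappa\leq\mf d(\kappa)$) separately. So assume $\mf r(\kappa)=\mu$ is singular with $\nu:=\cf(\mu)\leq\kappa$, and fix an RS family $F=\{A_\xi:\xi<\mu\}\subseteq[\kappa]^\kappa$ of size $\mu$ — as in the deduction of Theorem \ref{newbound}, if $F$ is not RS we already get $\mf d(\kappa)\leq|F|=\mf r(\kappa)$. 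Fix a cofinal sequence $(\mu_i)_{i<\nu}$ in $\mu$ with each $\mu_i<\mu$, and write $F\uhr\mu_i=\{A_\xi:\xi<\mu_i\}$. By the usual observation, for any $F_0\in[F]^\kappa$ there is some $i<\nu$ with $|F_0\cap F\uhr\mu_i|=\kappa$ (here we use $\cf(\mu)=\nu\leq\kappa=\cf(\kappa)$; if $\nu<\kappa$ this is immediate, and if $\nu=\kappa$ then $\kappa$ regular forces $F_0$ to meet some initial segment in a set of size $\kappa$). The point is that each $F\uhr\mu_i$ has size $<\mf r(\kappa)$, hence is \emph{not} reaping.

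The key step is then: for each $i<\nu$, since $|F\uhr\mu_i|<\mf r(\kappa)$ there is a single $B_i\subseteq\kappa$ that splits every $A\in F\uhr\mu_i$. I want to use the $B_i$ to build a small dominating-type family. Here is where I would lean on the interval/club machinery from the RS discussion: given $f\in\kappa^\kappa$ I construct, exactly as in the proof of Lemma \ref{lm:tech}, a decreasing $\kappa$-sequence of clubs together with witnesses $A^j\in F$ ($j<\kappa$) refuting the RS conclusion, so that $A^j\setm\delta^j\subseteq\kappa\setm\bigcup_{\xi\in E^j_2}[\xi,s_{E^j_1}(\xi))$. The set $\{A^j:j<\kappa\}$ meets some $F\uhr\mu_i$ in a set of size $\kappa$; but $B_i$ splits all of those, and an infinite subcollection of them lies on the same side of $B_i$ — that is the pigeonhole I need, and it is cheap since "split" for a reaping family only gives us one bit per set. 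From finitely/countably many $A^j$'s that both lie (mod $<\kappa$) outside the relevant interval-union \emph{and} on a fixed side of $B_i$, I extract the domination inequality $f\leq^* g$ for an appropriate $g$ read off from the $B_i$'s and the $s_{A^j}$'s, just as in Lemma \ref{lm:tech}. Running this uniformly produces a family $\mc G$ of size $\leq\nu\cdot\sup_{i<\nu}(\text{something of size }\leq\mu_i)\leq\mu$ — more carefully, I would feed, for each $i<\nu$, the family $\{s_A:A\in F\uhr\mu_i\}$ (of size $<\mf r(\kappa)$, but that bound alone is not enough!) into a dominating family and take a union; so actually the cleanest route is to apply Lemma \ref{lm:tech} directly with $\mc G=\bigcup_{i<\nu}\mc G_i$ where $\mc G_i$ dominates $\{s_A:A\in F\uhr\mu_i\}$, \emph{provided} such $\mc G_i$ of size $\leq\mu_i<\mf r(\kappa)$ exists.

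This last proviso is the main obstacle, and it is what forces the argument to be genuinely different from Theorem \ref{newbound}(\ref{three}): there is no reason a set of $<\mf r(\kappa)$ many functions $s_A$ is dominated by $<\mf r(\kappa)$ many functions. The resolution must use reaping rather than cardinality — i.e., that $F\uhr\mu_i$ is \emph{not reaping} (witnessed by $B_i$), not merely small. So the honest plan is: (1) reduce to $F$ being RS and $\mf r(\kappa)=\mu$ singular of cofinality $\nu\leq\kappa$; (2) fix $B_i$ reaping-refuting $F\uhr\mu_i$ for each $i<\nu$; (3) given $f$, run the Lemma \ref{lm:tech} club construction to get $\{A^j:j<\kappa\}$ refuting RS; (4) find $i$ with $|\{j:A^j\in F\uhr\mu_i\}|=\kappa$, then pass to an infinite subset where all these $A^j$ are on one fixed side of $B_i$, and additionally (by regularity of $\kappa$) where the $\delta^j$ are bounded and the interval structure stabilizes; (5) derive $f\leq^* g$ for $g$ definable from $B_i$ together with finitely many $s_{A^j}$ and the club $E^{j}_1$'s, and observe the collection of all such $g$, as $f$ and the construction vary, has size $\leq\nu\cdot\kappa\cdot(\text{finite products from }F\uhr\mu_i\text{-data})$ — which one checks is $\leq\mu=\mf r(\kappa)$. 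The bookkeeping in (5), ensuring the resulting dominating family really has size $\mf r(\kappa)$ and not more, is the delicate part; everything else is a direct adaptation of the Raghavan–Shelah interval argument already spelled out above.
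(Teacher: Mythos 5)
Your proposal has genuine gaps, both in the setup and in the main engine. First, the case split is wrong: $\cf(\mf r(\kappa))=\kappa$ does not force $\mf r(\kappa)=\kappa$ (indeed $\mf r(\kappa)\geq\kappa^+$ always, by a diagonalization of length $\kappa$); it forces $\mf r(\kappa)$ to be a \emph{singular} cardinal of cofinality $\kappa$, e.g.\ possibly $\aleph_{\oo_1}$ for $\kappa=\aleph_1$. This is a substantive case, not a triviality. Second, and more seriously, the heart of your argument is missing. You correctly observe that $|F\uhr\mu_i|<\mf r(\kappa)$ only buys a single set $B_i$ splitting every $A\in F\uhr\mu_i$, and that this must somehow substitute for domination of $\{s_A:A\in F\uhr\mu_i\}$; but the mechanism you propose does not work. ``An infinite subcollection on the same side of $B_i$'' is incoherent: $B_i$ \emph{splits} each such $A$, i.e.\ $|A\cap B_i|=|A\setm B_i|=\kappa$, so no $A$ lies on a side of $B_i$, and in any case one bit of membership information gives no control over the growth rate of $s_A$. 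Finally, in your step (5) the functions $g$ are extracted from data produced by the club construction, which is itself seeded by $f$; a dominating family must be fixed \emph{before} $f$ is given, so ``the collection of all such $g$ as $f$ varies'' has no size bound better than $2^\kappa$. You flag the bookkeeping as delicate, but it is not delicate --- it is circular.

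The paper's proof goes in the opposite direction and uses neither Lemma \ref{lm:tech} nor the RS property. Assume toward a contradiction that $\mu=\mf r(\kappa)<\mf d(\kappa)$ and $\lambda=\cf(\mu)\leq\kappa$. Given any $F\subs[\kappa]^\kappa$ of size $\mu$, write $F$ as an increasing union $\bigcup_{\xi<\lambda}F_\xi$ with $|F_\xi|<\mu$ and pick $B_\xi$ splitting every member of $F_\xi$. The sets $B_\xi$ are then \emph{glued} into a single set $B=\bigcup\{[\xi,s_E(\xi))\cap B_\xi:\xi\in E\}$ along a club $E$ of $f$-closed ordinals, where $f$ is chosen (using $\mu<\mf d(\kappa)$) to escape every function in an elementary submodel $M$ of size $\mu$ containing $F$ and the $B_\xi$'s. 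For each $A\in F$, the function recording where $A\cap B_{\alpha'}$ and $A\setm B_{\alpha'}$ next meet above $\alpha$ lies in $M$, so on $\kappa$ many intervals of $E$ the interval is long enough to catch points of both $A\cap B_{\xi}$ and $A\setm B_{\xi}$; hence $B$ splits every $A\in F$, contradicting $\mu=\mf r(\kappa)$. (The case $\lambda<\kappa$ requires a more elaborate interleaving of $\lambda$ many clubs, but the idea is the same.) Your decomposition into small pieces and the splitting sets $B_i$ are the right starting point, but the missing idea is this gluing along a fast club, which turns the hypotheses into an outright contradiction with $F$ being a reaping family rather than into an application of Lemma \ref{lm:tech}.
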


We mention that our proof is very specific to the uncountable case (and does not use Raghavan and Shelah's recent work).

\begin{proof}Let us assume that $\mu=\mf r(\kappa)<\mf d(\kappa)$ and $\cf(\mu)\leq \kappa$. Given some $F\subs [\kappa]^\kappa$ of size $\mu$, we construct a set $B$ which splits each $A\in F$. 

 We can write $F$ as an increasing union $\cup\{F_\xi:\xi<\lambda\}$ where $\lambda=\cf(\mu)$, so that $|F_\xi|<\mu$ and find $B_\xi$ that splits each $A\in F_\xi$. Our job is to glue together the sets $\{B_\xi:\xi<\lambda\}$ into a single set $B$ which splits all $A\in F$ at once.

Let us consider the $\lambda=\kappa$ case first: we would like to find a fast enough club $E\subs \kappa$ so that $$B=\bigcup\{[\xi,s_E(\xi))\cap B_\xi:\xi\in E\}$$ works. So, take an elementary submodel $M\prec H(\Theta)$ of size $\mu$ such that $F\cup \{B_\alpha:\alpha<\kappa\}\subs M$. By assumption $M\cap \kappa^\kappa$ is not dominating, so we can find a single $f\in \kappa^\kappa$ so that for any $g\in M\cap \kappa^\kappa$, the set $$I_g=\{\alpha<\kappa:g(\alpha)<f(\alpha)\}$$ has size $\kappa$. Now let $E$ be a club of $f$-closed ordinals in $\kappa$, and we claim that the above defined $B$ splits each $A\in F$.

Fix some $A\in F$ such that $A\in F_{\zeta}$ and note that $$g(\alpha)=\sup\{\min(A\cap B_{\alpha'}\setm (\alpha+1)):\alpha'\leq \alpha\}$$ is well defined for  $\alpha\geq \zeta$, and $g(\alpha)<\kappa$. The crucial property of $g$ we use is that $$(\alpha,g(\alpha))\cap A\cap B_{\alpha'}\neq \emptyset$$ for any $\alpha'\leq \alpha$. Since $g\in M$, the set $I_g$ has size $\kappa$.

Now, given $\alpha\in I_g\setm \zeta$, find the interval from $E$ that contains $\alpha$: let $\xi_\alpha=\sup(E\cap (\alpha+1))$ and note that $$\xi_\alpha\leq \alpha<g(\alpha)<f(\alpha)<s_E(\xi_\alpha)$$ since $E$ was $f$-closed. In particular, $$[\xi_\alpha,s_E(\xi_\alpha))\cap A\cap B=[\xi_\alpha,s_E(\xi_\alpha))\cap A\cap B_{\xi_\alpha}\neq \emptyset$$ and so $|B\cap A|\geq |\{\xi_\alpha:\alpha\in I_g\setm \zeta \}|= \kappa$ as desired.

We can similarly prove $|A\setm B|=\kappa$ by looking at the function $$g'(\alpha)=\sup\{\min (A\setm (B_{\alpha'}\cup (\alpha+1))):\alpha'\leq \alpha\}$$ and noting that $$[\xi_\alpha,s_E(\xi_\alpha))\cap A\setm B=[\xi_\alpha,s_E(\xi_\alpha))\cap A\setm B_{\xi_\alpha}\neq \emptyset$$ for any $\alpha\in I_{g'}\setm \zeta$.

\medskip

Now, suppose $\lambda=\cf(\mu)<\kappa$: this case will be handled similarly although the arguments are a bit more involved. We take an increasing sequence of elementary submodels $(M_\nu)_{\nu<\lambda}$ and sequence of functions $(f_\nu)_{\nu<\lambda}$ such that 
\begin{enumerate}
\item each $M_\nu$ has size $\mu$, and $(M_\eta)_{\eta<\nu}\in M_\nu$,
\item $f_\nu\in \kappa^\kappa\cap  M_{\nu+1}$ and $\eta<\nu$ implies $f_\eta<f_\nu$, and
\item $f_\nu$ is not $\leq^*$-dominated by any $g\in \kappa^\kappa\cap M_\nu$.
\end{enumerate}
Construct clubs $E_\nu\subs \kappa$ and functions $s^\nu\in \kappa^\kappa$ for $\nu<\lambda$ as follows: $s^0$ is the identity function on $\kappa$, and $E_0$ is an $f_0$-closed club. In general, $s^{\nu+1}=s_{E_\nu}\circ s^\nu$ and $s^\nu=\sup_{\eta<\nu}s^\eta$ for limit $\nu<\lambda$. We pick $$E_\nu\subs \bigcap_{\eta<\nu}E_\eta$$ so that each $\xi\in E_\nu$ is closed under $f_\nu$ and $s_\nu$. Moreover, we pick each $E_\nu\in M_{\nu+1}$ canonically which ensures $s^{\nu+1}\in M_{\nu+1}$ as well.

Now, we construct a club $E=\{\xi_\gamma:\gamma<\kappa\}\subs \kappa$. Let us outline the first $\lambda$ steps and then describe the general construction: $\xi_0=0$, $\xi_1=s_{E_0}(\xi_0)$, $\xi_2=s_{E_1}(\xi_1)$ and so on. At limit steps $\nu$ we take supremum: $\xi_\nu=\sup_{\eta<\nu}\xi_\eta$, and let $\xi_{\nu+1}=s_{E_\nu}(\xi_\nu)$ in successor steps. In other words, $\xi_\nu=s^\nu(0)$ for $\nu<\lambda$. This defines the first $\lambda$ many elements of $E$. In general, any $\gamma<\kappa$ can be written as $\gamma=\lambda\cdot \gamma_0+\nu$ for a unique $\nu=\nu(\gamma)<\lambda$ and we set $$\xi_{\gamma+1}=s_{E_\nu}(\xi_\gamma)=s^{\nu}(\xi_{\lambda\cdot \gamma_0}).$$ 

\begin{figure}[H]

\includegraphics[width=0.6\textwidth]{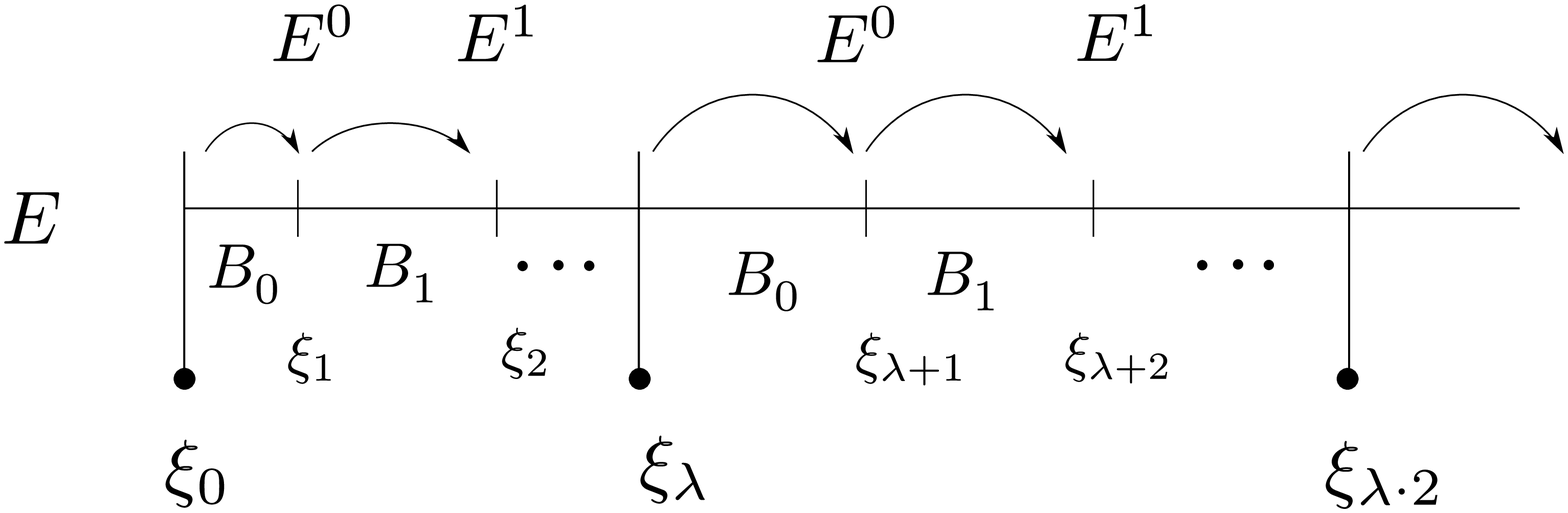}
  \centering
  \caption{Defining $B$ from $\{B_\xi:\xi<\lambda\}$}
  \label{cof_diag}
\end{figure}

This defines the club $E$, and we set $$B=\bigcup\{[\xi_\gamma,\xi_{\gamma+1})\cap B_{\nu(\gamma)}:\gamma<\kappa\}.$$

We would like to show that $B$ works: fix some $A\in F$, and we need that both $A\cap B$ and $A\setm B$ have size $\kappa$. 

Now, fix some $\nu_0<\lambda$ so that $A\in F_{\nu_0}$. Note that $A\cap B_\nu$ and $A\setm B_\nu$ are both of size $\kappa$ for $\nu_0\leq \nu<\lambda$ so the set of accumulation points $\acc(A\cap B_\nu)$ and $ \acc(A\setm B_\nu)$ are clubs in $\kappa$. In, particular we can define a function $g_A$ by setting \begin{equation}\label{eq}
g_A(\alpha)=\min \bigcap_{\nu\in\lambda\setm \nu_0}\bigl (\acc(A\cap B_\nu)\cap \acc(A\setm B_\nu)\bigr )\setm (s^{\nu_0+1}(\alpha)+1).
                                                                                                                                                                                                                                                           \end{equation}

The important fact here is that $g(\alpha)$ is an accumulation point of all the sets $A\cap B_\nu$ and $A\setm B_\nu$ (where $\nu\in \lambda\setm \nu_0$) above $s^{\nu_0+1}(\alpha)$. Moreover, since $g_A\in \kappa^\kappa\cap M_{\nu_0+1}$, the set $$I=\{\alpha<\kappa:g_A(\alpha)<f_{\nu_0+1}(\alpha)\}$$ must have size $\kappa$.

We can find a single $\nu<\lambda$ so that there are $\kappa$ many $\gamma<\kappa$ of the form $\gamma=\lambda\cdot \gamma_0+\nu$ such that $[\xi_\gamma,\xi_{\gamma+1})\cap I\neq \emptyset$.
Fix such an $\alpha\in  [\xi_\gamma,\xi_{\gamma+1})\cap I$ and recall that $\xi_{\gamma+1}\in E_\nu$. 

So if $\nu_0<\nu$ then  $\xi_{\gamma+1}$ is $s^{\nu_0+1}$-closed and so $s^{\nu_0+1}(\alpha)<\xi_{\gamma+1}$. In turn, 
\begin{equation}\label{eq2}
 \xi_\gamma\leq \alpha<g_A(\alpha)<f_{\nu_0+1}(\alpha)\leq f_\nu(\alpha)<\xi_{\gamma+1}
\end{equation}
since $\xi_{\gamma+1}$ was closed under $f_\nu$. Since $g_A(\alpha)$ was an accumulation point of both $A\cap B_{\nu(\gamma)}$ and $A\setm B_{\nu(\gamma)}$ (see the definition in (\ref{eq})) and by equation (\ref{eq2}), we must have $$[\xi_\gamma,\xi_{\gamma+1})\cap A\cap B_{\nu(\gamma)}\neq \emptyset,$$ and $$[\xi_\gamma,\xi_{\gamma+1})\cap A\setm B_{\nu(\gamma)}\neq \emptyset.$$

Now, consider the case when $\nu\leq \nu_0$. It still holds that $\alpha<s^\nu(\alpha)<\xi_{\gamma+1}$ since $\xi_{\gamma+1}$ is $s^\nu$-closed and so $s^{\nu+1}(\alpha)=s_{E_\nu}\circ s^\nu(\alpha) =\xi_{\gamma+1}$. In turn, $$s^{\nu_0+1}(\alpha)=\xi_{\lambda\cdot \gamma_0+\nu_0+1}=:\xi\in E_{\nu_0}.$$ The next point in our club $E$ is $\xi^+:=\xi_{\lambda\cdot \gamma_0+\nu_0+2}\in E_{\nu_0+1}$ which is $f_{\nu_0+1}$-closed. So $$\xi<g_A(\alpha)<f_{\nu_0+1}(\alpha)<\xi^+.$$ As $g_A(\alpha)$ is an accumulation point of $A\cap B_{\nu_0+1}$ and  $A\setm B_{\nu_0+1}$ above $\xi$, we see that both $$[\xi,\xi^+)\cap A\cap  B_{\nu_0+1}\neq \emptyset,$$ and $$[\xi,\xi^+)\cap A\setm B_{\nu_0+1}\neq \emptyset$$ holds.

All in all, we proved that there are $\kappa$ many $\gamma<\kappa$ so that both 

\begin{align*}
  [\xi_\gamma,\xi_{\gamma+1})\cap A\cap B=[\xi_\gamma,\xi_{\gamma+1})\cap A\cap B_{\nu(\gamma)}\neq \emptyset
\end{align*}
 and  
 \begin{align*}
   [\xi_\gamma,\xi_{\gamma+1})\cap A\setm B =[\xi_\gamma,\xi_{\gamma+1})\cap A\setm B_{\nu(\gamma)}\neq \emptyset
 \end{align*}
 
 holds. In turn, $B\cap A$ and $A\setm B$ both have size $\kappa$.



\end{proof}

Hence $\mf d(\aleph_1)\leq \mf r(\aleph_1)$ unless $\mf r(\aleph_1)> \aleph_{\oo_1}$. It is still open if the inequality $\mf r(\aleph_1)<2^{\aleph_1}$ is consistent, and hence we don't know whether $\cf{\mf r(\kappa)}\leq \kappa$ is consistent at all.

\medskip

The inequalities known to us are summarized in Figure \ref{diag_a0}-\ref{diag_bo} below, where arrows point to cardinals greater or equal. First, the most studied case when $\kappa$ is $\aleph_0$, with plenty of independence between the characteristics:

\begin{figure}[H]

\includegraphics[width=.75\textwidth]{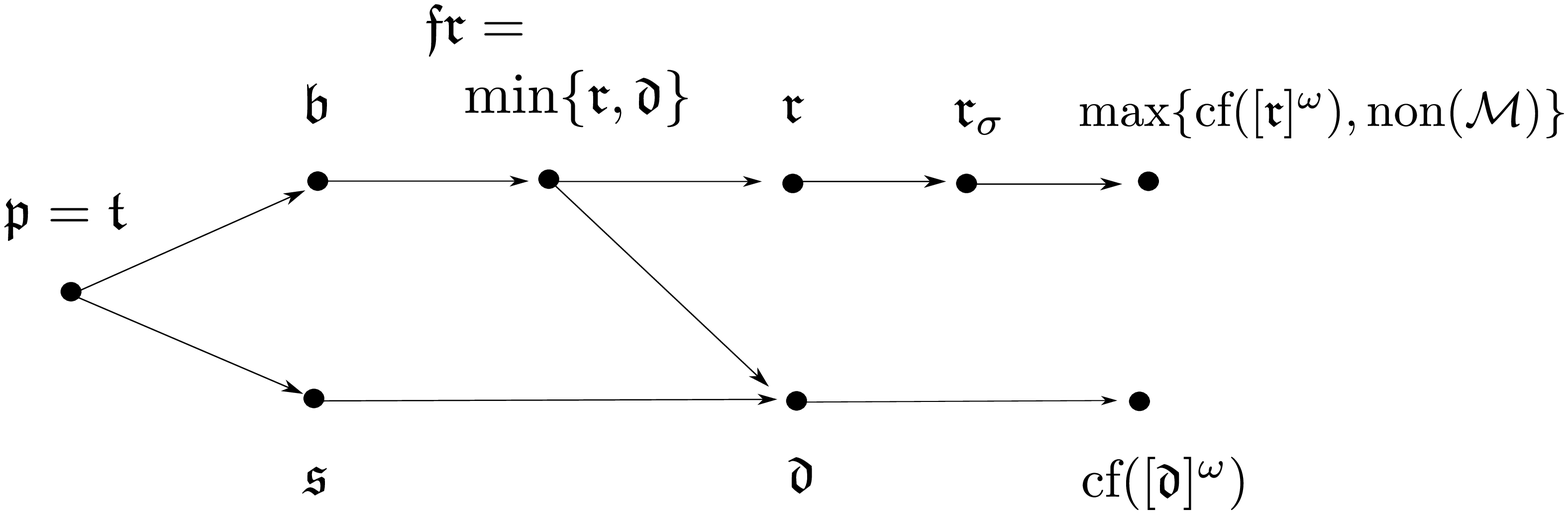}
  \centering
  \caption{The case of $\kappa=\aleph_0$}
  \label{diag_a0}
\end{figure}

\medskip

Next, note how the picture simplifies between the splitting and reaping numbers as we move to uncountable values of $\kappa$:
\medskip

\begin{figure}[H]

\includegraphics[width=1\textwidth]{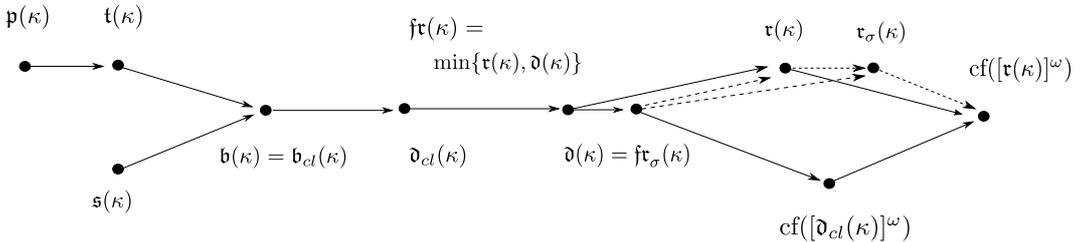}
  \centering
  \caption{The case of $\kappa=\cf(\kappa)>\aleph_0$}
  \label{diag_reg}
\end{figure}

The dashed arrows adjacent to $\mf r_\sigma(\kappa)$ hold whenever $\mf r_\sigma(\kappa)$ exists (that is, for $\kappa>2^{\aleph_0}$). Furthermore, the dashed arrow between $\mf r(\kappa)$ and $\mf d(\kappa)$ is valid also when $\cf(\mf r(\kappa))\leq \kappa$.

\medskip

Finally, an even more linear diagram above $\beth_\oo$:
\medskip

\begin{figure}[H]

\includegraphics[width=.8\textwidth]{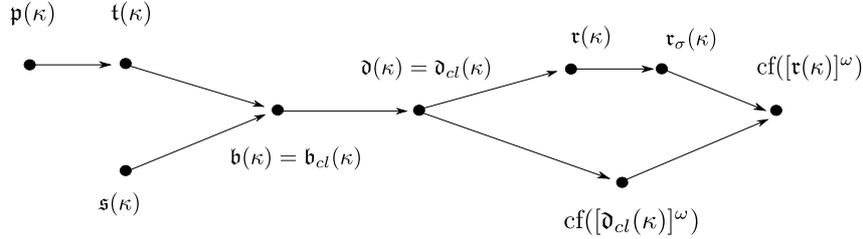}
  \centering
  \caption{The case of $\kappa=\cf(\kappa)\geq \beth_\oo$}
  \label{diag_bo}
\end{figure}

\section{Questions}

Finally, let us emphasize a few fascinating open problems about combinatorial cardinal characteristics; we would  also like to refer the interested reader to \cite{brook0, brook, khomskii} for further reading and questions on the generalized Baire space and Cichon's diagram.

\begin{prob}\cite{dilip}
 Is $\mf b(\kappa)<\mf a(\kappa)$ consistent for an uncountable, regular $\kappa$?
\end{prob}

The answer is yes at least for $\kappa=\aleph_0$ \cite{bashelah}.

\begin{prob}\cite{cummings}
 Does $\mf d(\kappa)=\mf d_{cl}(\kappa)$ hold for all uncountable, regular $\kappa$?
\end{prob}

\begin{prob}\cite{dilip2}
 Does $\mf d(\kappa)\leq \mf r(\kappa)$ hold for all uncountable, regular $\kappa$?
\end{prob}

Recall that if $\mf d(\kappa)=\mf d_{cl}(\kappa)$ then $\mf d(\kappa)\leq \mf r(\kappa)$  follows.

\begin{prob}
 Can $\mf r(\kappa)$ have cofinality at most $\kappa$? 
\end{prob}

In particular, is $\mf r(\aleph_1)=\aleph_\oo$ possible?

\begin{prob}\cite{dilip2}
 Is it consistent that $\mf r(\aleph_1)<2^{\aleph_1}$? 
\end{prob}

Given that $\mf s(\kappa)=\kappa$ unless $\kappa$ is quite large (i.e., weakly compact), one might conjecture a dual result: $\mf r(\kappa)<2^{\kappa}$ implies that $\kappa$ is large \cite{dilip2}.

\begin{prob}\cite{omer}
 Is it consistent that $\mf s(\kappa)$ is singular for some uncountable, regular $\kappa$?
\end{prob}

\begin{prob}
 Does $\mf p(\kappa)=\mf t(\kappa)$ for all  uncountable, regular $\kappa$?
\end{prob}

The last two problems have a positive answer for $\kappa=\aleph_0$ \cite{dow,malliaris}.

\end{document}